\documentclass{article}
\usepackage[utf8]{inputenc}
\usepackage{mlmath}
\usepackage{array,multirow}
\usepackage{extarrows}
\usepackage{amsthm}
\usepackage{graphicx}
\usepackage{float}
\usepackage{subfigure}
\usepackage{listings}
\usepackage{color}
\usepackage{appendix}
\usepackage{grffile}
\usepackage{caption}
\usepackage{float}
\usepackage{blindtext}
\usepackage{titlesec}
\usepackage{bm}
\usepackage{bbm}
\usepackage{tikz}
\usepackage{authblk}
\usetikzlibrary{shapes.geometric, arrows}
\usetikzlibrary{positioning}

\tikzstyle{startstop} = [rectangle, rounded corners, 
minimum width=3cm, 
minimum height=1cm,
text centered, 
draw=black, 
fill=red!30]

\tikzstyle{io} = [trapezium, 
trapezium stretches=true, 
trapezium left angle=70, 
trapezium right angle=110, 
minimum width=1.5cm, 
minimum height=1cm, text centered, 
draw=black, fill=blue!30]

\tikzstyle{process} = [
rectangle, 
minimum width=3cm, 
minimum height=1cm, 
text centered, 
text width=3cm, 
draw=black, 
fill=orange!30]

\tikzstyle{decision} = [diamond, 
minimum width=3cm, 
minimum height=1cm, 
text centered, 
draw=black, 
fill=green!30]

\tikzstyle{arrow} = [thick,->,>=stealth]

\definecolor{dkgreen}{rgb}{0,0.6,0}
\definecolor{gray}{rgb}{0.5,0.5,0.5}
\definecolor{mauve}{rgb}{0.58,0,0.82}

\lstset{frame=tb,
  language=Python,
  aboveskip=3mm,
  belowskip=3mm,
  showstringspaces=false,
  columns=flexible,
  basicstyle={\small\ttfamily},
  numbers=none,
  numberstyle=\tiny\color{gray},
  keywordstyle=\color{blue},
  commentstyle=\color{dkgreen},
  stringstyle=\color{mauve},
  breaklines=true,
  breakatwhitespace=true,
  tabsize=3
}

\theoremstyle{plain}
\newtheorem{thm}{Theorem}[section]
\newtheorem{lem}[thm]{Lemma}

\newtheorem{cor}{Corollary}[thm]

\theoremstyle{definition}

\theoremstyle{remark}
\newtheorem*{rem}{Remark}

\usepackage[pdftex,linkcolor=black,citecolor=black,backref=page]{hyperref}
\hypersetup{
colorlinks=true,
linkcolor=black,
anchorcolor=black,
citecolor=black}

\providecommand{\keywords}[1]
{
  \small	
  \textbf{\textit{Keywords---}} #1
}

\title{Blessing of High-Order Dimensionality: from Non-Convex to Convex Optimization for Sensor Network Localization}

\author[1]{Mingyu Lei\thanks{mr.ray@sjtu.edu.cn}}
\author[1]{Jiayu Zhang\thanks{jswd50x-1x1zshn@sjtu.edu.cn}}
\author[2]{Yinyu Ye\thanks{yyye@stanford.edu}}
\affil[1]{Shanghai Jiao Tong University}
\affil[2]{Standford University}
\date{}

\begin{document}
\maketitle
\begin{abstract}
This paper investigates the Sensor Network Localization (SNL) problem, which seeks to determine sensor locations based on known anchor locations and partially given anchors-sensors and sensors-sensors distances. 
Two primary methods for solving the SNL problem are analyzed: the low-dimensional method that directly minimizes a loss function, and the high-dimensional semi-definite relaxation (SDR) method that reformulates the SNL problem as an SDP (semi-definite programming) problem.
The paper primarily focuses on the intrinsic non-convexity of the loss function of the low-dimensional method, which is shown in our main theorem.
The SDR method, via second-order dimension augmentation, is discussed in the context of its ability to transform non-convex problems into convex ones; while the first-order direct dimension augmentation fails.
Additionally, we will show that more edges don't necessarily contribute to the better convexity of the loss function. 
Moreover, we provide an explanation for the success of the SDR+GD (gradient descent) method which uses the SDR solution as a warm-start of the minimization of the loss function by gradient descent.
The paper also explores the parallels among SNL, max-cut, and neural networks in terms of the blessing of high-order dimension augmentation.
\end{abstract}
\keywords{Semi-definite Programming, Sensor Network Localization, High-Order Dimension Augmentation, Graph Realization}

\section{Introduction}\label{Introduction}

A sensor network is a network made up of devices equipped with rangefinders. 
The nodes of the network are bipartite, comprising anchors and sensors. 
The positions of the anchors are fixed and known. 
The sensor network localization (SNL) problem aims to determine sensor locations based on actual anchor locations and partially given anchors-sensors and sensors-sensors distances, taking into account possible noise. 

In this paper, we explore two important methods for solving the SNL problem.
The first method involves minimizing a certain loss function without constraints using an iterative optimization algorithm; see, e.g., \cite{biswas_lian_wang_ye_2006, biswas_five_people_2006, pong_tseng_2010, tseng_2007, wang2008}. 
The second method, known as the semi-definite relaxation (SDR) method, transforms the original non-convex optimization problem into a convex one by second-order dimension augmentation; see, e.g., \cite{biswas_five_people_2006, biswas_ye_revised, by2004, so2007, wang2008}.
We should emphasize here that the loss function has different forms.
The differences among different forms are the degrees of the terms.
One of the forms is a polynomial, where the inside degree and the outside degree are both $2$.
For each form of the loss function, we can augment its dimension.
The well-studied SDR method is the dimension augmentation of the form with inside degree $2$ and outside degree $1$.

While extensive research has been done on the algorithmic aspect of both methods, the geometric landscape of their optimization formulations remains largely unexplored, particularly for the first method. 
In this paper, we focus on the non-convex property of the loss function, which is important for the analysis of the first method.

The SDR method involves increasing the problem's dimension to transform it into a convex optimization problem.
This allows for the resolution of the original non-convex problem.
Notably, the SDR+GD (gradient descent) method, which employs the SDR solution as a warm-start for the loss function method solved by GD, exhibits superior performance in practice \cite{biswas_lian_wang_ye_2006, biswas_five_people_2006, wang2008}.

To simplify the discussion, we will refer to the first method as the ``loss function method" since our primary focus lies in analyzing the non-convex property of the loss function that is used by this method.
Additionally, we emphasize that the loss function method is low-dimensional compared to the SDR method, which is high-dimensional. 
The ``low-dimensional" here has two meanings.
The first is that the loss function method has fewer optimization variables than the SDR method.
For instance, the phrase ``dimension augmentation" in our paper means that one considers a new optimization problem that has more optimization variables compared with the original low-dimensional optimization problem. 
The second is the geometric meaning of the solutions provided by the two methods.
It is shown by \cite{so2007} that the solution to the SDR optimization problem, which has more variables, can be considered as a graph realization of the original SNL problem in a higher-dimensional space.

Additionally, the dimension augmentation employed by the SDR method is a product-type dimension augmentation, therefore a second-order dimension augmentation.
Here, the order of the dimension augmentation is determined by the highest number of original variables that are multiplied together to form the augmented variables.
In the works of Lasserre \cite{lasserre2007sum,lasserre2009moments}, the benefits of high-order dimension augmentation are outlined.
Lasserre proved that as the order of the dimension augmentation of the original general polynomial optimization increases, the solution to the dimension-augmented optimization problem will converge to the solution to the original polynomial optimization problem.
Here, the dimension augmentation of the original polynomial optimization is always an SDP.
Furthermore, under certain circumstances, finite convergence is guaranteed \cite{lasserre2007sum, lasserre2009moments, finite_convergence}.
The second-order dimension augmentation, i.e., the SDR method, of the loss function method has the same solution as the loss function method under certain circumstances as we will show in Section \ref{properties of the solution matrix}.
Therefore, this is an example of finite convergence, although our original optimization problem is slightly different from a polynomial optimization problem.
In the context of SNL, Nie proposed a fourth-order dimension augmentation of the polynomial form of the loss function method \cite{nie2009sum}.
This is also an SDP and the finite convergence is guaranteed under certain circumstances.
However, as mentioned above, finite convergence can be guaranteed using only second-order dimension augmentation;
that is, second-order is enough in the context of SNL.
Nonetheless, the direct dimension augmentation, or the first-order dimension augmentation of the loss function method, remains non-convex.
This will be proved in Section \ref{Non-Convexity of Direct Dimension Augmentation}.

Similar findings have also been observed in the field of neural networks.
Large neural networks increase the dimension of the problem and extract more information from the dataset; transfer learning can also be considered as generating a warm-start for new models, with probably less trainable parameters or a cheaper training process.
Both increasing the size of neural networks and transfer learning contribute to the good performance of the neural networks.
Therefore, we will give a comparison of the two scenarios for SNL and neural networks.

\noindent\textbf{Related works}

Biswas and Ye \cite{by2004} reformulated sensor network localization (SNL) as a semi-definite programming (SDP) problem. 
Their pioneering work involved the development of the semi-definite relaxation (SDR) technique for SNL, which transforms the SNL problem into a convex optimization problem and exhibits outstanding performance. 
However, the SDR solutions to SNL problems that are nearly ill-posed or affected by noise are sometimes inaccurate.
Furthermore, from an algorithmic point of view, solving the SDP given by the SDR method remains to be time-consuming today.
We refer the readers to \cite{biswas_ye_revised, tseng_2007, wang2008, pong_tseng_2010} for some methods to accelerate the standard SDR proposed in \cite{by2004}. 

Later, several methods were presented to improve the accuracy and speed of SDR in SNL problems affected by noise. 
For instance, heuristics such as reusing the SDR solution to regularize the problem or using a warm-start showed outstanding performance \cite{biswas_five_people_2006, biswas_lian_wang_ye_2006}.
Research such as \cite{zhu2010universal, so2007, sophd} gave good theoretical results on the original version of SDR \cite{by2004}. 
However, their results cannot fully explain why the regularized SDR works in noisy cases. 
It is also not clear when the minimization (for example, gradient descent) will converge to the global minimizer during the stage of minimizing the loss function.

The literature on source localization suggested that, in the one-sensor case of SNL, the loss function is non-convex in general \cite{source_2008, source_2018}.
Additionally, better performance of minimizing the loss function when a warm-start is used was reported in \cite{biswas_five_people_2006}.
However, no rigorous results on the commonly encountered SNL loss functions have been presented and the geometric landscape of the loss function remains largely unexplored.

Another line of work involves implementing multidimensional scaling (MDS) techniques, as seen in works such as \cite{mds_2004, mds_survey, mds_2003, mds_2006}. 
MDS methods can provide a rough estimation for further minimization of the loss function; see, e.g., \cite{galp2023}. 
However, benchmarks in \cite{galp2023} and figures in \cite{mds_2003, mds_2006} indicated that MDS accuracy is lower than SDR for cases with limited connectivity.
Nonetheless, MDS is faster \cite{galp2023} and has already been applied to various fields \cite{mds_survey}.

\noindent\textbf{Our contributions}
\begin{itemize}
    \item In Section \ref{non_convex_sufficient_condition}, we conduct a comprehensive analysis of the SNL loss function landscape. 
    Our main theorem \ref{main_theorem} establishes that, with a high probability dependent on the number of sensors, the loss function is non-convex. 
    We also demonstrate that directly augmenting the problem's dimension does not yield a convex loss function. 
    However, the second-order dimension augmentation, in other words, the SDR method, is always convex.
    Together with Nie's fourth-order dimension augmentation \cite{nie2009sum}, we can highlight the blessing of high-order dimension augmentation.

    \item It was thought that more edges (information) contribute to a better loss function of SNL and if sufficient edges are given, the loss function will be convex.
    This is correct intuitively because more information usually yields better problems and if sufficient edges are given, the graph of the SNL problem can even be a complete graph, meaning that the SNL problem has the best rigidity.
    However, more edges don't always contribute to a better loss function and even if all edges are given, the loss function can still be non-convex as we will show.
    Specifically, we will provide an example where if more edges are given, the originally convex loss function will become non-convex.
    Additionally, following our main theorem, we will prove that when anchors are relatively few, even if all edges are given, the loss function is non-convex with high probability.
    
    \item Prior works (e.g., \cite{biswas_five_people_2006, biswas_lian_wang_ye_2006, wang2008}) have reported practical success using the SDR solution as a warm-start for minimization, but a comprehensive theoretical explanation for this success has been lacking. 
    Building on a deeper understanding of the loss function landscape, we provide a more rigorous explanation for the effectiveness of this method, referred to as SDR+GD in our paper. 
    The explanation draws upon insights from graph rigidity theory, offering valuable insights into the reasons behind the success of SDR+GD.

    \item Some perspectives are provided in this paper.
    First and most importantly,  the success of the second-order and the fourth-order dimension augmentation and the failure of the first-order dimension augmentation in the context of SNL, and the success of the second-order dimension augmentation in the context of max-cut indicates the blessing of high-order dimensionality.
    Second, the advantage of specific dimension augmentation and the advantage of the scenario that uses the high-dimensional solution as the warm-start of the new model are observed in both the SNL and the neural network fields.
    The reasons for the advantages in the SNL field and in the neural networks field may have common explanations.
\end{itemize}

\noindent\textbf{Outline of the paper}

The rest of the paper is organized as follows. 
In Section \ref{preliminaries} we define some key concepts. 
In Section \ref{Non-Convexity of the Low-Dimensional Optimization} we prove that the loss function \eqref{loss function with parameter bc} is non-convex with high probability in our main Theorem \ref{main_theorem} and provide an example where as more edges are given, the originally convex loss function becomes non-convex.
We also show that direct dimension augmentation cannot improve the loss function method. 
In Section \ref{Convexity of the High-Dimensional Optimization} we discuss the convexity of the SDR's optimization problem obtained by a product-type dimension augmentation of the loss function method. 
In Section \ref{High-dimensional Solution as a Warm-Start of the Low-dimensional Optimization} we provide an explanation for why the regularized SDR+GD method in \cite{biswas_five_people_2006,biswas_lian_wang_ye_2006} works.
In Section \ref{Comparison to the Neural Network}, we discuss the relation between our findings and some similar methods in neural networks, including dimension augmentation and projecting warm-starts.
Finally, in Section \ref{conclu}, we give our conclusions and some future work. 

\section{Preliminaries}
\label{preliminaries}
We will define the concepts and notations that we will discuss in this paper. 
The notation in this paper is mostly standard.

First of all, we use $\Vert\cdot\Vert$ to denote the $L_2$ norm and $\Vert\cdot\Vert_1$ to denote the $L_1$ norm.
Moreover, the distance we refer to in this paper is the Euclidean distance.
We use $\overline{A}$ to denote the complement of a set $A$ and $\sqcup$ to denote the disjoint union.
Given symmetric matrices $X$ and $Y$, we use $X\succeq Y$ to indicate that $X-Y$ is positive semi-definite.

\textit{Graph} $G=(V,E)$ in this paper is first a finite, simple, and undirected graph. 
Additionally, the vertices of it are divided into two types of vertices or two colors. 
One is ``anchor''. 
The other is ``sensor''. 
Finally, there is no edge between anchors and anchors.
In this paper, we always use $s_i$ to denote sensors and $a_i$ to denote anchors.
Therefore, if $G$ has $n$ sensors and $m$ anchors, we have $G$'s vertices set $V=\{s_1,\dots,s_n,a_1,\dots,a_m\}$.
We also use $V_s$ to denote all sensors and $V_a$ to denote all anchors.
Therefore, we have $V_s=\{s_1,\dots,s_n\}$ and $V_a=\{a_1,\dots,a_m\}$.

\textit{Framework} is defined to be a graph $G$ together with an embedding $p:V\to \mathbb{R}^d$ mapping the vertices into an Euclidean space.
Every vertex in $V$ is mapped to its position by the map $p$.
Therefore, we can calculate every Euclidean distance between two vertices.

$N_x, N_a$ are defined to be the known edges of a graph $G$.
They are a bipartition of the set $E$ of $G$'s edges.
$N_x$ is the set of all edges between different sensors; $N_a$ is the set of all edges between sensors and anchors.
Therefore, if $V=\{s_1,\dots,s_n,a_1,\dots,a_m\}$, we have 
$N_x=\{\{s_i,s_j\}|\{s_i,s_j\}\in E\}$ 
and 
$N_a=\{\{a_i,s_j\} | \{a_i,s_j\}\in E\}$.
To simplify notation, we can rewrite 
$N_x=\{\{i,j\}|\{s_i,s_j\}\in E\}$
and 
$N_a=\{(i,j)|\{a_i,s_j\}\in E\}$.
Here, ``$\{\cdot\}$'' means an unordered set and ``$(\cdot)$'' means an ordered set.

An $\textit{SNL problem}$ $(G,d,\tilde{d},a)$ is defined to be a problem where we know $\xi=(G,d,\tilde{d},a)$ and want to find frameworks with proper positions of sensors. 
Among them, $G$ is a graph of sensors and anchors, $d$ is a mapping $N_x\to \mathbb{R}^+$ meaning all given distances between different sensors, $\tilde{d}$ is a mapping $N_a\to \mathbb{R}^+$ meaning all given distances between sensors and anchors, and $a$ is a mapping $V_a\to \mathbb{R}^d$ meaning all positions of the anchors.
To simplify notation, we use $d_{ij}$ to denote $d(\{s_j,s_j\})$, $\tilde{d}_{ij}$ to denote $\tilde{d}(\{a_i,s_j\})$, and the same symbol $a_i$ to denote the position of the anchor $a_i$.
We call an SNL problem $d$-dimensional if the dimension of its anchors is $d$.
We define a framework to be the \textit{solution} to the SNL problem if the framework has the same graph $G$ and the same positions of anchors as the SNL problem and generates equal distances to the SNL problem.
Equivalently, to solve an SNL problem is to find $x_i$'s that satisfy the following equations,
\begin{equation}
\label{solve_the_SNL_problem}
\begin{cases}
    \Vert a_i - x_j\Vert = \tilde d_{ij}\quad \forall (i,j) \in N_a, \\ 
    \Vert x_i - x_j\Vert = d_{ij} \quad \forall \{i,j\} \in N_x.
\end{cases}
\end{equation}
Therefore, we also say that $(x_1^T\ x_2^T\ \dots\ x_n^T)^T\in \mathbb{R}^{dn}$ is a solution to the SNL problem.
Furthermore, a framework can generate a unique SNL problem to which the framework is a solution.

Directly increasing or augmenting the dimension of the SNL problem $(G,d,\tilde{d},a)$ from $d$-dimension to $d'$-dimension means increasing the dimension of the anchors $a$ by adding zeros in the augmented coordinates of the anchors' positions.
Therefore, the solution to the high-dimensional SNL problem has the same dimension as the high-dimensional anchors.
This is what we call a high-dimensional solution to an SNL problem.
Relatively speaking, the solution to the original SNL problem is low-dimensional.

A $\textit{Unit-disk SNL case}$ $(U,r,n,a)$ is defined by the region $U\subseteq \mathbb{R}^d$, the radius $r>0$, the number of sensors $n\in \mathbb{N}$, and the positions of anchors $a={a_1,\dots, a_m}\subset U$. 
This case represents a distribution of the SNL problem $(G,d,\tilde{d},a)$, generated by a random framework. 
The sensors are $n$ random vectors, independently drawn from a probability distribution $P$ on $U$. 
The anchors remain the same as the given ones and an edge is included in the framework if and only if the Euclidean distance between the two vertices of the edge is less than or equal to $r$.
In this paper, $U$ is bounded and $P$ is the uniform distribution on $U$.

The $\textit{loss function}$ of an SNL problem $(G,d,\tilde{d},a)$ is defined to be 
\begin{equation}
\label{loss function with parameter bc}
\operatorname{loss}\left(x\right)=\sum_{\{i,j\} \in N_x}|\Vert x_i-x_j\Vert^b-d_{i j}^b|^c+\sum_{(i, j) \in N_a}|\Vert a_i-x_j\Vert^b-\tilde{d}_{ij}^b|^c,
\end{equation}
where $x_i\in \mathbb{R}^d$ for $i=1,2,\dots,n$.
We call $b$ the inside degree and $c$ the outside degree.
The domain of $\operatorname{loss}(x)$ is $\mathbb{R}^{dn}$ and the codomain of $\operatorname{loss}(x)$ is $\mathbb{R}$. 
Moreover, we define the loss function of a framework to be the loss function of the SNL problem generated by the framework.

In this paper, we often use the loss function
\begin{equation}\label{loss_function}
\operatorname{loss}\left(x\right)=\sum_{\{i,j\} \in N_x}|\Vert  x_i-x_j\Vert ^2-d_{i j}^2|+\sum_{(i, j) \in N_a}|\Vert a_i-x_j\Vert ^2-\tilde{d}_{ij}^2|,
\end{equation}
which is the $b=2,c=1$ case of \eqref{loss function with parameter bc}.

We introduce some basic definitions from graph rigidity theory.

A framework is defined to be \textit{locally rigid} if it is not flexible; i.e., there exists a neighbourhood of the framework's sensors such that there is no other solution to the SNL problem generated by the framework in the neighbourhood.

A framework is defined to be \textit{globally rigid} if the SNL problem generated by the framework has a unique solution.

A framework is defined to be \textit{universally rigid} if the SNL problem generated by the framework has a unique solution in any dimension.

\section{Non-Convexity of the Low-Dimensional Optimization}
\label{Non-Convexity of the Low-Dimensional Optimization}
In this section, we mainly discuss the low-dimensional optimization method, i.e., minimizing the loss function.

This method of minimizing the loss function is widely used in former works such as \cite{biswas_five_people_2006,biswas_lian_wang_ye_2006, tseng_2007, pong_tseng_2010, wang2008}.
In this section, we will show that the loss function is often non-convex. 
Specifically, in Subsection \ref{common_non_convexity}, we will prove that in the unit-disk SNL case, if the anchors are relatively few, the loss function \eqref{loss function with parameter bc} will be non-convex with a high probability, which is our main depiction of the non-convexity of the loss function. 

The unit-disk SNL case is important in practice since the wireless sensors can only communicate with their neighbors within a limited radio range or ``radius". 
The importance of the unit-disk SNL case was also mentioned in \cite{biswas_five_people_2006, biswas_lian_wang_ye_2006, zhu2010universal, wang2008, by2004, kuhn2003ad}.
Hence, it is common that many researchers assume the unit-disk SNL case to carry out probabilistic and combinatorial results on sensor networks, we refer the readers to \cite{Rigidity_computation_and_randomization_in_network_localization, penrose_1999, shamsi_taheri_zhu_ye_2013, bettstetter2002minimum} for details of these results.

We first show some landscapes of the SNL loss function to let the readers know its common non-convexity intuitively in Subsection \ref{landscape_of_the_SNL_loss}.
Next, we introduce our main theorem in Subsection \ref{common_non_convexity}.
Moreover, we will show that direct dimension augmentation is also often non-convex in Subsection \ref{Non-Convexity of Direct Dimension Augmentation}.

\subsection{Landscape Examples of the SNL Loss Function}
\label{landscape_of_the_SNL_loss}

In this subsection, in order to show the landscape more clearly, the dimension of the SNL problem is set to $2$ and the loss function \eqref{loss function with parameter bc} is set to the $b=2,c=1$ case \eqref{loss_function}.

The loss function \eqref{loss_function} can be regarded as a summation of positive terms assigned for each edge of the graph. 
We first consider one term of the summation, i.e., 
\begin{equation}\label{one_term_loss_function}
    f_{ij}(x_j)=|\Vert a_i-x_j\Vert ^2-\tilde{d}_{ij}^2|.
\end{equation}

\begin{figure}[H]
   \begin{minipage}{0.44\textwidth}
     \centering
     \includegraphics[width=\linewidth]{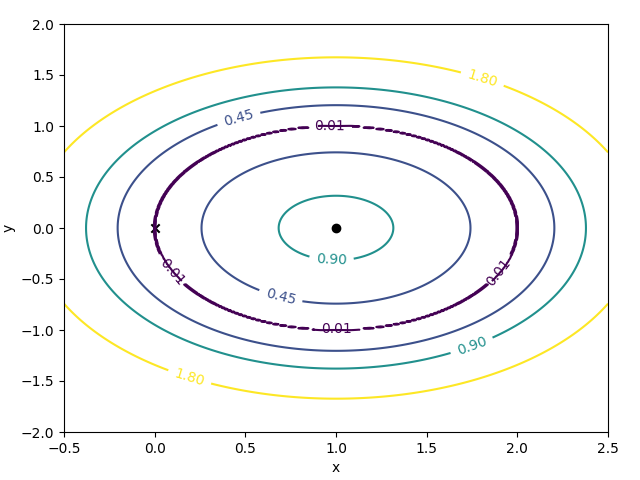}
   \end{minipage}\hfill
    \begin{minipage}{0.55\textwidth}
     \centering
     \includegraphics[width=\linewidth]{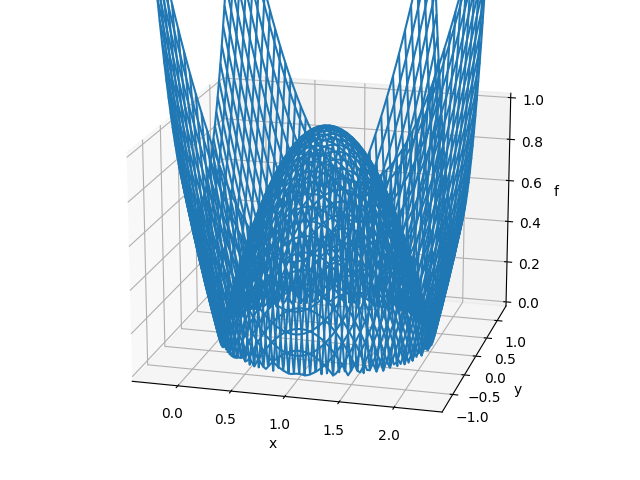}
   \end{minipage}\hfill
   
   \caption{Landscape of loss function of 1 anchor 1 sensor}\label{1a1s}
\end{figure}
The minimizers of the loss function \eqref{one_term_loss_function} form a circle of radius $\tilde{d}_{i j}$ around the anchor, see Figure \ref{1a1s}.

In general, the loss function \eqref{loss_function} is non-convex.
Here is a non-convex example of the SNL loss function with 3 anchors and 1 sensor. 
We construct the example by setting the positions of the anchors to $(-1,0)$, $(1, 0)$ and $(0, 0.4)$ and the position of the sensor to $(0, 1)$.
The graph of the framework is fully connected.
By direct calculation, the loss function of this SNL problem is
$$
\begin{aligned}
    f(x) =& |\Vert x - (-1, 0)\Vert ^2 - (\sqrt 2)^2| + |\Vert x - (1, 0)\Vert ^2 - (\sqrt 2)^2| \\ 
    &+ |\Vert x - (0, 0.4)\Vert ^2 - 0.6^2|, 
\end{aligned}
$$
where $x\in \mathbb{R}^2$.
It is easy to see from Figure \ref{Landscape of loss function of 3 anchors 1 sensor}, which is the landscape of $f(x)$, that $f(x)$ is non-convex and has at least two local minimizers. 
We note that the framework is globally rigid, which means that there exists a unique solution to the SNL problem \eqref{solve_the_SNL_problem}.
However, the loss function $f(x)$ has at least two local minimizers.
As a result, global rigidity does not imply the convexity of the SNL loss function, even if the associated graph of the framework is fully connected. 
Furthermore, as a comparison, we can see in Figure \ref{Landscape of SDR loss function of 3 anchors 1 sensor} that the loss function of SDR optimization is convex.  
\begin{figure}[H]
   \begin{minipage}{0.44\textwidth}
     \centering
     \includegraphics[width=\linewidth]{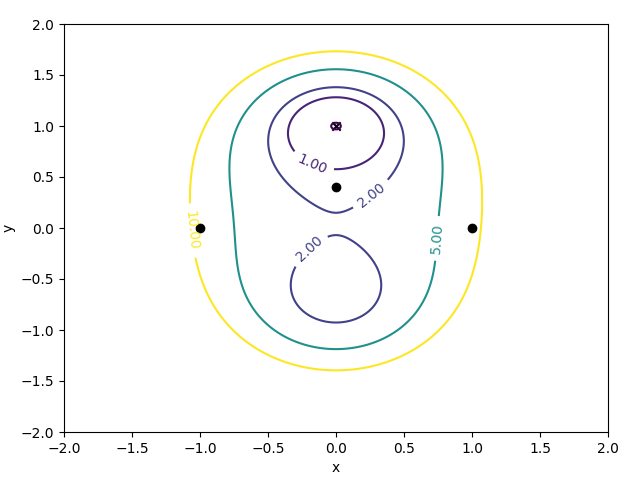}
   \end{minipage}\hfill   
   \begin{minipage}{0.55\textwidth}
     \centering
     \includegraphics[width=\linewidth]{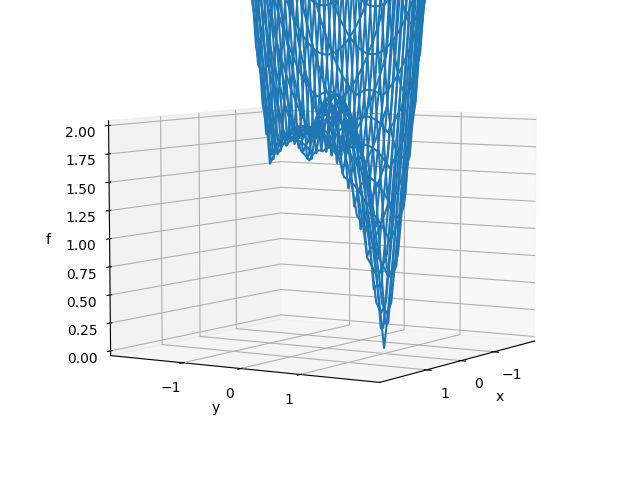}
   \end{minipage}
   
   \caption{Landscape of loss function of 3 anchors 1 sensor}
   \label{Landscape of loss function of 3 anchors 1 sensor}
\end{figure}

\subsection{Common Non-Convexity}\label{common_non_convexity}
In this subsection, we will prove that the loss function is often non-convex in the unit-disk SNL case.

We give an outline of the proof in this paragraph. 
First, a sufficient condition for the loss function \eqref{loss function with parameter bc} to be non-convex is given in Lemma \ref{lemma_of_main_theorem_sufficient_condition_of_non_convex}.
Then we use the sufficient condition \eqref{non_convex_sufficient_condition} to estimate the probability.
We first estimate the left-hand side of \eqref{non_convex_sufficient_condition} by a constructive event.
The event is an intersection of events that control the lower bound on the distances on the left-hand side of \eqref{non_convex_sufficient_condition}.
The probability of the event is estimated by some probability inequalities.
Using some deterministic inequalities, we estimate the right-hand side of \eqref{non_convex_sufficient_condition}.
Then if this event occurs, the sufficient condition \eqref{non_convex_sufficient_condition} holds.
This finishes the proof.

Then we begin the rigorous proof.

We first have this key lemma as a sufficient condition for the loss function \eqref{loss function with parameter bc} to be non-convex.
\begin{lem}\label{lemma_of_main_theorem_sufficient_condition_of_non_convex}
A sufficient condition for the loss function \eqref{loss function with parameter bc} to be non-convex is
\begin{equation}\label{non_convex_sufficient_condition}
2\sum_{\{i,j\}\in N_x}d_{ij}^{bc}>\sum_{(i,j)\in N_a}|\Vert a_i+s_j\Vert ^b-\tilde{d}_{ij}^b|^c.
\end{equation}
\end{lem}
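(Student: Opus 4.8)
The plan is to refute convexity directly from its definition by exhibiting three collinear points at which the chord dips below the graph. The candidates, dictated by the appearance of $\Vert a_i+s_j\Vert$ in the claimed inequality, are the true sensor configuration $s=(s_1^T\ \dots\ s_n^T)^T$, its reflection through the origin $-s=(-s_1^T\ \dots\ -s_n^T)^T$, and their midpoint, the origin $\mathbf 0=\frac{1}{2}\big(s+(-s)\big)$. I would evaluate $\operatorname{loss}$ at each of these three points. Since the SNL problem here is generated by the framework whose sensors sit at $s$, we have $d_{ij}=\Vert s_i-s_j\Vert$ and $\tilde{d}_{ij}=\Vert a_i-s_j\Vert$, so that $\operatorname{loss}(s)=0$.

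The two remaining evaluations are short computations. At $-s$ the inter-sensor distances are unchanged, $\Vert(-s_i)-(-s_j)\Vert=\Vert s_i-s_j\Vert=d_{ij}$, so every term of the first sum in \eqref{loss function with parameter bc} vanishes, while each anchor--sensor term becomes $|\Vert a_i+s_j\Vert^b-\tilde{d}_{ij}^b|^c$; hence $\operatorname{loss}(-s)=\sum_{(i,j)\in N_a}|\Vert a_i+s_j\Vert^b-\tilde{d}_{ij}^b|^c$, which is precisely the right-hand side of \eqref{non_convex_sufficient_condition}. At the origin every position is zero, so the sensor--sensor terms collapse to $\sum_{\{i,j\}\in N_x}d_{ij}^{bc}$ and the anchor--sensor terms contribute only a nonnegative remainder $\sum_{(i,j)\in N_a}|\Vert a_i\Vert^b-\tilde{d}_{ij}^b|^c\ge 0$; thus $\operatorname{loss}(\mathbf 0)\ge\sum_{\{i,j\}\in N_x}d_{ij}^{bc}$.

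Convexity of $\operatorname{loss}$ would force the midpoint inequality $\operatorname{loss}(\mathbf 0)\le\frac{1}{2}\big(\operatorname{loss}(s)+\operatorname{loss}(-s)\big)=\frac{1}{2}\sum_{(i,j)\in N_a}|\Vert a_i+s_j\Vert^b-\tilde{d}_{ij}^b|^c$. Feeding in the two evaluations and the hypothesis $2\sum_{\{i,j\}\in N_x}d_{ij}^{bc}>\sum_{(i,j)\in N_a}|\Vert a_i+s_j\Vert^b-\tilde{d}_{ij}^b|^c$ of \eqref{non_convex_sufficient_condition}, one obtains $\operatorname{loss}(\mathbf 0)\ge\sum_{\{i,j\}\in N_x}d_{ij}^{bc}>\frac{1}{2}\sum_{(i,j)\in N_a}|\Vert a_i+s_j\Vert^b-\tilde{d}_{ij}^b|^c=\frac{1}{2}\big(\operatorname{loss}(s)+\operatorname{loss}(-s)\big)$, contradicting the midpoint inequality and thereby establishing non-convexity.

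The computations are entirely routine once the test points are fixed, so the only genuine step is choosing them, and this is where I expect the (mild) difficulty to lie. The guiding idea is to seek a symmetric pair whose midpoint destroys the inter-sensor alignment, pushing the sensor--sensor terms up to $\sum d_{ij}^{bc}$, while keeping one endpoint at loss zero. Reflecting the exact solution through the origin is the simplest choice that simultaneously preserves all distances $\Vert s_i-s_j\Vert$ (so $\operatorname{loss}(-s)$ depends only on the anchor terms) and places the midpoint at $\mathbf 0$. The deliberate looseness of discarding the nonnegative term $\sum|\Vert a_i\Vert^b-\tilde{d}_{ij}^b|^c$ is exactly what converts the exact midpoint identity into the clean sufficient condition \eqref{non_convex_sufficient_condition}.
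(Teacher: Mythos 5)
Your proposal is correct and is essentially the paper's own proof: both arguments test midpoint convexity on the triple $s$, $-s$, and their midpoint $0$, use $\operatorname{loss}(s)=0$ and $\operatorname{loss}(-s)=\sum_{(i,j)\in N_a}|\Vert a_i+s_j\Vert^b-\tilde{d}_{ij}^b|^c$, and discard the nonnegative anchor contribution $\sum_{(i,j)\in N_a}|\Vert a_i\Vert^b-\tilde{d}_{ij}^b|^c$ at the origin to reduce the violated midpoint inequality to \eqref{non_convex_sufficient_condition}. If anything, you make explicit two points the paper leaves implicit, namely that $s$ is assumed to be an exact solution of the SNL problem and that dropping the nonnegative term is what turns the exact midpoint comparison into the stated sufficient condition.
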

\begin{proof}
    Consider three points.
    $s\in \mathbb{R}^{dn}$ is the sensors' point.
    $z=0\in \mathbb{R}^{dn}$ is the second point.
    $y=-s$ is the third point.
    It is obvious that $z=0.5s+0.5y$.
    Therefore, a sufficient condition for the loss function \eqref{loss function with parameter bc} to be non-convex is 
    $$
    2\operatorname{loss}(z)>\operatorname{loss}(s)+\operatorname{loss}(y),
    $$
    which is 
    $$
    2\sum_{\{i,j\}\in N_x}d_{ij}^{bc}>\sum_{(i,j)\in N_a}(|\Vert a_i+s_j\Vert ^b-\tilde{d}_{ij}^b|^c-2|\Vert a_i\Vert ^b-\tilde{d}_{ij}^b|^c).
    $$
    Therefore, the inequality \eqref{non_convex_sufficient_condition} is the sufficient condition for the loss function \eqref{loss function with parameter bc} to be non-convex.
    This finishes the proof.
\end{proof}
\begin{rem}
    The condition appears to be too strong in general. 
    However, the condition is not that strong in the specific context of SNL.
    As demonstrated in Theorem \ref{qualitative_sufficient_two_local_minimizers}, the non-convexity of the loss function \eqref{loss function with parameter bc} typically arises from the symmetry of the nodes.
    Additionally, we can translate and rotate the nodes such that the axis of symmetry passes the origin.
    Therefore, this sufficient condition is the key of the non-convexity of the loss function \eqref{loss function with parameter bc} to some extent.
\end{rem}

\begin{thm}\label{main_theorem}
    For a unit-disk SNL case $(U,r,n,a)$, assume that the region $U$ is connected, open, and bounded.
    Then for any given set of anchors $a$, radius $r>0$, and probability $p_0<1$, there exists $N\in\mathbb{N}$ such that for all number of sensors $n>N$, the probability that the loss function \eqref{loss function with parameter bc} of the SNL problem in the unit-disk SNL case $(U,r,n,a)$ is not convex is larger than $p_0$.
    Specifically, a lower bound on the probability is $1-nv\tbinom{n-1}{v-1}(1-p_r)^{n-v}$.
    Here, $v=\lfloor \sqrt{n} \rfloor$
    and $p_r$ is a positive constant probability dependent solely on the region $U$ and the radius $r$.
    The definition of $p_r$ will be given in the proof.
\end{thm}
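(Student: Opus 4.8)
The plan is to combine the deterministic sufficient condition of Lemma \ref{lemma_of_main_theorem_sufficient_condition_of_non_convex} with a high-probability lower bound on the number of ``long'' sensor--sensor edges. First I would control the right-hand side of \eqref{non_convex_sufficient_condition} deterministically: since $U$ is bounded and the anchor set $a$ is fixed, every summand $|\Vert a_i+s_j\Vert^b-\tilde d_{ij}^b|^c$ is at most a constant $M=M(U,a,b,c)$, and the number of anchor--sensor edges is at most $mn$. Hence the right-hand side is bounded above by $Mmn$; it grows only \emph{linearly} in $n$. The whole strategy then rests on making the left-hand side $2\sum_{\{i,j\}\in N_x}d_{ij}^{bc}$ grow \emph{superlinearly}, so that it eventually dominates.

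To force superlinear growth I would fix two scales $0<\delta<r$ and show that, with high probability, every sensor has at least $v=\lfloor\sqrt n\rfloor$ other sensors lying at distance between $\delta$ and $r$ from it. Each such pair is an edge of $N_x$ with $d_{ij}\ge\delta$, so this event produces at least $nv/2$ edges with $d_{ij}^{bc}\ge\delta^{bc}$, giving $2\sum_{\{i,j\}\in N_x}d_{ij}^{bc}\ge nv\,\delta^{bc}=\Theta(n^{3/2})$. Comparing with the bound $Mmn$ on the right-hand side, the sufficient condition \eqref{non_convex_sufficient_condition} holds as soon as $n^{1/2}\delta^{bc}>Mm$, i.e. for all $n$ beyond a deterministic threshold $N_0$ depending only on $U,a,b,c$.

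The probability estimate is the heart of the argument. Conditioned on the position $x$ of a fixed sensor, the number of the remaining sensors landing in its annular window is $\mathrm{Binomial}(n-1,q(x))$, where $q(x)$ is the normalized measure of that window intersected with $U$. I would let $p_r$ be a positive lower bound for $q(x)$ valid for all admissible centers, so that $P(\text{fewer than }v\text{ long neighbors})=\sum_{k=0}^{v-1}\binom{n-1}{k}q(x)^k(1-q(x))^{n-1-k}$. Using $v-1\le(n-1)/2$ for large $n$ (so that $\binom{n-1}{k}\le\binom{n-1}{v-1}$ for $k\le v-1$) together with the monotonicity $(1-q(x))^{n-1-k}\le(1-p_r)^{n-v}$, each of the $v$ terms is at most $\binom{n-1}{v-1}(1-p_r)^{n-v}$; a union bound over the $n$ sensors then shows the bad event has probability at most $nv\binom{n-1}{v-1}(1-p_r)^{n-v}$, which is exactly the claimed complementary bound. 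Since $v=\lfloor\sqrt n\rfloor$, the estimate $\log\binom{n-1}{v-1}=O(\sqrt n\log n)$ against $(n-v)\log(1-p_r)=-\Theta(n)$ forces this quantity to $0$, so it drops below $1-p_0$ for all $n>N_1$; taking $N=\max(N_0,N_1)$ completes the proof.

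The step I expect to be the main obstacle is establishing that $p_r>0$ can be chosen \emph{uniformly} over the relevant centers: near the boundary of a general connected open bounded $U$ the measure $q(x)$ of the annular window can degenerate, so the honest version of the argument must either restrict the counted centers to an interior region that still carries $\Theta(n)$ sensors, or exploit openness and connectedness of $U$ to produce a fixed annular window of positive measure into which long neighbors are counted. Pinning down this geometric constant $p_r$, and verifying that it depends only on $U$ and $r$, is where the real care is needed; the binomial tail bound and the order-of-growth comparison are then routine.
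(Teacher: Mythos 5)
Your proposal matches the paper's proof essentially step for step: the same sufficient condition from Lemma \ref{lemma_of_main_theorem_sufficient_condition_of_non_convex}, the same annulus event (your $\delta$ is the paper's $\varepsilon r$) giving a left-hand side of order $nv\delta^{bc}=\Theta(n^{3/2})$ against an $O(n)$ deterministic bound on the right, and the same conditional-binomial tail estimate plus union bound yielding exactly $1-nv\tbinom{n-1}{v-1}(1-p_r)^{n-v}$. The only cosmetic differences are that you bound each binomial term via monotonicity of the binomial coefficient where the paper proves the analogous monotonicity in $p$ by a derivative computation (its Lemma \ref{lemma_by_derivation}), and the uniformity of $p_r$ that you rightly flag as the delicate geometric point is handled in the paper simply by postulating an $\varepsilon$ for which the uniform lower bound $p_r>0$ holds, without proof.
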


\begin{proof}
Without loss of generality, we assume that the origin is in the region $U$.

Then we define some events.
Define event $A_i$ to be $\{$There exist at least $v$ sensors in the region between two concentric circles with radii $\varepsilon r$ and $r$ centered at $s_i\}$. 
Here, we set $v$ to $\lfloor \sqrt{n} \rfloor$ and set $\varepsilon\in (0,1)$ to satisfy the following property: no matter where the sensor $s$ is, the probability that the distance from $s$ to another independent random sensor $s'$ belongs to $[\varepsilon r,r]$ has a uniform lower bound $p_r>0$, or more rigorously, there exists $p_r>0$ such that for all $x\in U$, we have
    $$
        P(\Vert s-s'\Vert \in[\varepsilon r,r]|s=x) \geq p_r.
    $$
    In most cases, such as a square region, we can simply set $\varepsilon$ to $0.5$.

    If the event $A_1\cap A_2\cap\dots\cap A_n$ occurs, we have
    \begin{equation}\label{high_probability_not_one_point_convex_s_i-s_j}
    \sum_{\{i,j\}\in N_x}d^{bc}_{ij}\geq \frac{1}{2}nvr^{bc}\varepsilon^{bc}.
    \end{equation}

    Then we estimate the probability $P(A_1\cap A_2\cap\dots \cap A_n)$.
    By basic knowledge of probability theory, we have
    \begin{align}
        P(A_1\cap A_2\cap\dots \cap A_n)&=1-P(\overline{A_1\cap A_2\cap\dots\cap A_n})\notag \\
        &=1-P(\overline{A_1}\cup \overline{A_2}\cup\dots\cup \overline{A_n})\notag \\
        &\geq 1-\sum_{i=1}^nP(\overline{A_i})\notag \\
        &=1-nP(\overline{A_1}).\label{one_minus_np}
    \end{align}
    
    Then we estimate $P(\overline{A_1})$. 
    Define event $A_{1i}$ to be $\{$There exist exactly $i$ sensors in the region between two concentric circles with radii $\varepsilon r$ and $r$ centered at $s_1\}$.
    Then, we have
    $$
    \overline{A_1}=A_{10}\sqcup A_{11}\sqcup A_{12}\sqcup\dots\sqcup A_{1(v-1)}.
    $$
    Therefore, we have
    \begin{equation}\label{sum_of_probability}
    P(\overline{A_1})=P(A_{10})+P(A_{11})+\dots+P(A_{1(v-1)}).
    \end{equation}
    Then we estimate $P(A_{1k})$ for $k=0,\dots, v-1$. 
    Let $s'$ be an independent random sensor, we define
    $$
        p_{s_1}: = P({\Vert s'-s_1 \Vert \in [\varepsilon r, r]}|s_1)
    $$
    for all fixed $s_1\in U$. 
    By the definition of $p_r$, we have $p_{s_1} \in [p_r,1]$. 
    Therefore, 
    \begin{equation}
    \label{p(A1k)}
        P(A_{1k})=\mathbb E(\mathbbm{1}_{A_{1k}})=\mathbb E(\mathbb E(\mathbbm{1}_{A_{1k}}|s_1))=\mathbb E(\tbinom{n-1}{k}(1-p_{s_1})^{n-1-k}p_{s_1}^k),
    \end{equation}
    for all $k=0,1,\dots,v-1$.
    Then we estimate the term $\tbinom{n-1}{k}(1-p_{s_1})^{n-1-k}p_{s_1}^k$ in Lemma \ref{lemma_by_derivation}.
\begin{lem}\label{lemma_by_derivation}
If $n$ satisfies $(n-1)p_r\geq \lfloor \sqrt{n} \rfloor - 1$ and $k\in\{0,1,\dots,v-1\}$ is given, we have 
    $$
    (1-p)^{n-1-k}p^k\leq(1-p_r)^{n-1-k}p_r^k
    $$
    for all $p\in[p_r,1]$.
\end{lem}
\begin{proof}
        Define 
        $$
        f(p):=(1-p)^{n-1-k}p^k.
        $$
        Then its derivative is
        \begin{equation}
        \label{probability_derivative}
        f'(p)=(1-p)^{n-1-k-1}p^{k-1}\big( (n-1-k)(-1)p+(1-p)k \big).
        \end{equation}
        
        As $(n-1)p_r\geq \lfloor \sqrt{n} \rfloor - 1$, we have 
        $$
        k\leq (n-1)p
        $$
        for all $k=0,1,2,\dots,v-1$ and $p\in[p_r,1]$.
        Therefore, we have
        $$
        (k+1-n)p+(1-p)k = (1-n)p+k\leq0
        $$
        for all $k=0,1,2,\dots,v-1$ and $p\in[p_r,1]$.
        Therefore, according to \eqref{probability_derivative}, we have $f'(p)\leq 0$ for all $p\in [p_r, 1]$.
        As a result, $f(p)\leq f(p_r)$; i.e.,
    $$
    (1-p)^{n-1-k}p^k\leq(1-p_r)^{n-1-k}p_r^k.
    $$
    \end{proof}
    
    By Lemma \ref{lemma_by_derivation}, if $n$ is large enough, we have
    $$
    P(A_{1k}) = \mathbb E(\tbinom{n-1}{k}(1-p_{s_1})^{n-1-k}p_{s_1}^k)\leq \tbinom{n-1}{k}(1-p_r)^{n-1-k}p_r^k.
    $$
    Therefore, by \eqref{sum_of_probability}, we have
    \begin{align}
    &P(\overline{A_1})\notag\\
    =&P(A_{10})+P(A_{11})+\dots+P(A_{1(v-1)})\notag\\
    \leq& \sum_{k=0}^{v-1}\tbinom{n-1}{k}(1-p_r)^{n-1-k}p_r^k\notag \\
    \leq& v\tbinom{n-1}{v-1}(1-p_r)^{n-v}\notag.
    \end{align}
    As a consequence, for large enough $n$, 
    $$
    nP(\overline{A_1})\leq nv\tbinom{n-1}{v-1}(1-p_r)^{n-v}\leq vn^{\sqrt{n}}(1-p_r)^{\frac{1}{2}n}.
    $$
    As $n$ becomes larger, $vn^{\sqrt{n}}(1-p_r)^{\frac{1}{2}n}$ is approaching $0$. 
    Therefore, by \eqref{one_minus_np}, $P(A_1\cap A_2\cap\dots \cap A_n)$ is approaching $1$.

    Next, we consider \eqref{non_convex_sufficient_condition}. Denote the region's diameter by $D$; i.e., 
    $$
    D:=\sup_{x,y\in U}\Vert x-y\Vert .
    $$
    Since we have assumed that the origin is in the region $U$, we have $\Vert a_i\Vert ,\Vert s_i\Vert \leq D$.
    Due to the definition of $D$, we also have $|\tilde{d}_{ij}|\leq D$.
    Therefore, we have
    \begin{align}
        &\sum_{(i,k)\in N_a}|\Vert a_i+s_j\Vert ^b-\tilde{d}_{ij}^b|^c \notag\\ 
        \leq&\sum_{(i,k)\in N_a}\max\{\Vert a_i+s_j\Vert ^{bc},\tilde{d}_{ij}^{bc}\} \notag\\ 
        \leq&\sum_{(i,k)\in N_a}(2D)^{bc} \notag \\ 
        \leq&nm(2D)^{bc}.\label{high_probability_not_one_point_convex_N_a_term}
    \end{align}
    Here, we denote the number of anchors by $m$.
    Due to \eqref{high_probability_not_one_point_convex_s_i-s_j} and \eqref{high_probability_not_one_point_convex_N_a_term}, if $n$ is big enough, \eqref{non_convex_sufficient_condition} holds.
    Therefore, with a high probability larger than $1-nv\tbinom{n-1}{v-1}(1-p_r)^{n-v}$, the loss function \eqref{loss function with parameter bc} of the SNL problem in the unit-disk SNL case is not convex.
\end{proof}
\begin{cor}
    If we let $r>\sqrt{2}$, the graph of the random framework will always be complete.
    This means that even if all edges are given, when the anchors are relatively few, the loss function \eqref{loss function with parameter bc} is non-convex with high-probability in the unit-disk SNL case.
\end{cor}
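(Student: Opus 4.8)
The plan is to treat this corollary as an immediate specialization of Theorem \ref{main_theorem}, the only genuinely new ingredient being a one-line geometric observation that a sufficiently large radius forces the random graph to be complete. I would work in the standard planar benchmark, taking $U$ to be the unit square $[0,1]^2$ (or, more generally, any region with $\operatorname{diam}(U)=\sup_{x,y\in U}\Vert x-y\Vert \le \sqrt 2$), since $\sqrt 2$ is exactly the length of the diagonal of the unit square.

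First I would establish completeness. Fix $r>\sqrt 2 \ge \operatorname{diam}(U)$. For any two vertices of the realized framework, whether two sensors or an anchor and a sensor, both lie in $U$, so their Euclidean distance is at most $\operatorname{diam}(U)\le \sqrt 2 < r$. By the edge rule of the unit-disk SNL case, an edge is present precisely when the distance is at most $r$; hence every admissible pair is joined. Recalling that the graph carries no anchor--anchor edges by definition, this means the realized graph always contains every sensor--sensor and anchor--sensor edge, i.e.\ it is complete, and the resulting SNL problem is exactly the ``all edges given'' instance.

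Next I would invoke the main theorem. Since $r>\sqrt 2>0$ is an admissible radius and $U$ is connected, open, and bounded, Theorem \ref{main_theorem} applies verbatim: for the fixed anchor set $a$ and any target probability $p_0<1$ there is an $N$ such that the loss function \eqref{loss function with parameter bc} is non-convex with probability exceeding $p_0$ whenever $n>N$. The phrase ``anchors relatively few'' is encoded by holding the number of anchors $m$ fixed while $n\to\infty$. This is precisely the regime in which the lower bound \eqref{high_probability_not_one_point_convex_s_i-s_j} on the left-hand side of the sufficient condition grows like $\tfrac12 n v r^{bc}\varepsilon^{bc}=\Theta(n^{3/2})$, whereas the upper bound \eqref{high_probability_not_one_point_convex_N_a_term} on the right-hand side is $nm(2D)^{bc}=\Theta(n)$ with $m$ held constant; this super-linear versus linear growth is exactly what drives \eqref{non_convex_sufficient_condition} for large $n$. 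Combining the two steps shows that the complete-graph instance is non-convex with high probability.

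The one point I would state explicitly rather than hide is the implicit normalization of $U$: the constant $\sqrt 2$ is the diagonal of the unit square, so the completeness claim needs $\operatorname{diam}(U)\le\sqrt 2$, and for a general bounded $U$ one simply replaces $\sqrt 2$ by $\operatorname{diam}(U)$. A secondary check, already subsumed by the proof of Theorem \ref{main_theorem}, is that the constant $p_r$ stays positive for this large $r$: one chooses $\varepsilon\in(0,1)$ small enough that $\varepsilon r<\operatorname{diam}(U)$, so that the annulus event $\{\Vert s-s'\Vert\in[\varepsilon r,r]\}$ retains uniformly positive probability. I do not expect any substantive obstacle here; the corollary is a direct specialization of the main theorem together with the diameter argument, so the ``hard part'' has already been done in proving Theorem \ref{main_theorem}.
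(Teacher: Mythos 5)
Your proposal is correct and follows essentially the same route as the paper, which states the corollary without a separate proof precisely because, once $r>\sqrt{2}\ge\operatorname{diam}(U)$ forces every sensor--sensor and anchor--sensor pair to be within range (so the realized graph is always complete), Theorem \ref{main_theorem} applies verbatim with the anchor count $m$ fixed and $n\to\infty$. Your explicit remarks on the normalization $\operatorname{diam}(U)\le\sqrt{2}$ and on keeping $p_r>0$ by choosing $\varepsilon r<\operatorname{diam}(U)$ merely make precise what the paper leaves implicit.
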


Following the above corollary, we provide an example to show further that more edges (information) don't always imply better convexity of the loss function \eqref{loss function with parameter bc}.

Actually, we provide a stronger example here.
The example satisfies that as the radius of the sensors increases, which implies that more edges are given, the originally convex loss function \eqref{loss_function} becomes non-convex.

In this example, the anchors are respectively $a_1=(-e,h)$, $a_2=(-e,-h)$, $a_3=(e,h)$, $a_4=(e,-h)$, and the sensors are respectively $s_1=(-e,0)$ and $s_2=(e,0)$.
Here, $e>0$ and $2e-\varepsilon<h<2e$ for a very small $\varepsilon>0$.
The connections among the nodes are shown in Figure \ref{Connection among the nodes}. 
\begin{figure}[H]
   \begin{minipage}{0.5\textwidth}
     \centering
     \includegraphics[width=\linewidth]{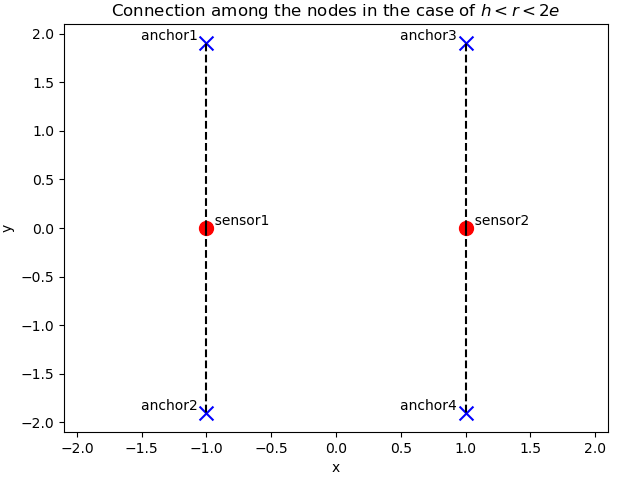}
   \end{minipage}\hfill   
   \begin{minipage}{0.5\textwidth}
     \centering
     \includegraphics[width=\linewidth]{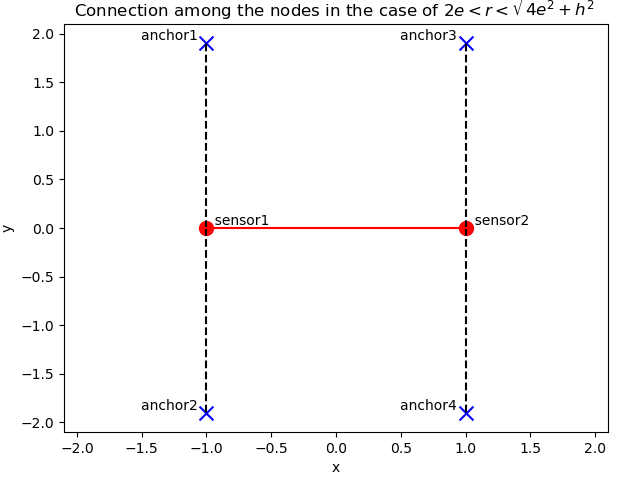}
   \end{minipage}
   
   \caption{The connections among the nodes in the case of $h<r<2e$ (left) and $2e<r<\sqrt{4e^2+h^2}$ (right).}
   \label{Connection among the nodes}
\end{figure}

When the radius $r$ is relatively small, the framework contains four edges, which are $\{a_1,s_1\}$, $\{a_2,s_1\}$, $\{a_3,s_2\}$ and $\{a_4,s_2\}$.
In this case, we denote the loss function by
\begin{equation*}
    F_0(x_{11},x_{12},x_{21},x_{22})=f_1(x_{11},x_{12})+f_2(x_{21},x_{22}),
\end{equation*}
where $f_1$ denotes the two terms of edges $\{a_1,s_1\}$ and $\{a_2,s_1\}$, and $f_2$ denotes the two terms of edges $\{a_3,s_2\}$ and $\{a_4,s_2\}$.
To prove that $F_0$ is convex, we only need to prove that $f_1$ and $f_2$ are convex.
To prove that $f_1$ and $f_2$ are convex, we only need to prove that the function
$$
f(x,y)=|x^2+(y-h)^2-h^2|+|x^2+(y+h)^2-h^2|
$$
is convex, since $f_1$ and $f_2$ can be obtained by translation of $f$.
Additionally, by direct calculation, we have 
$$
f(x,y)=\max\{x^2+(y-h)^2-h^2+x^2+(y+h)^2-h^2,4hy,-4hy\}.
$$
Since $f$ is the maximum of three convex functions, $f$ itself is a convex function.
As a conclusion, the loss function $F_0$ is a convex function.

When the radius $r$ is relatively large, the frame work contains five edges, which are $\{a_1,s_1\}$, $\{a_2,s_1\}$, $\{a_3,s_2\}$, $\{a_4,s_2\}$ and $\{s_1,s_2\}$.
In this case, we denote the loss function by $F(x_{11},x_{12},x_{21},x_{22})$.
Then we can directly verify that 
$$
F(-e,h,e,h)+F(e,h,-e,h)<2F(0,h,0,h).
$$
Therefore, $F$ is non-convex.

Moreover, we will introduce another theorem to show that in some cases, the loss function \eqref{loss function with parameter bc} has more than one local minimizers.
\begin{thm}
\label{qualitative_sufficient_two_local_minimizers}
    Let $\rho$ be a $2$-dimensional locally rigid framework with colinear anchors.
    Consider a new framework $\tau$ obtained by slightly perturbing the anchors of $\rho$.
    Then, the loss function \eqref{loss function with parameter bc} of framework $\tau$ still possesses at least 2 local minimizers.
\end{thm}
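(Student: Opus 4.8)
The plan is to exploit the reflection symmetry created by the colinear anchors, which produces two distinct exact solutions of $\rho$, and then to show that both survive as local minimizers after the anchor perturbation by a compactness/stability argument. Let $\ell$ be the line containing all anchors of $\rho$, let $R:\mathbb{R}^2\to\mathbb{R}^2$ be the reflection across $\ell$, and write $s=(s_1,\dots,s_n)$ for the sensor positions of $\rho$. Since $R$ is an isometry fixing every point of $\ell$, it fixes every anchor, so for any configuration $x$ one has $\operatorname{loss}_\rho(Rx)=\operatorname{loss}_\rho(x)$, because $\Vert Rx_i-Rx_j\Vert=\Vert x_i-x_j\Vert$ and $\Vert a_i-Rx_j\Vert=\Vert Ra_i-Rx_j\Vert=\Vert a_i-x_j\Vert$. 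In particular $Rs$ realises the same distances as $s$, so both $s$ and $Rs$ are solutions of the SNL problem generated by $\rho$, i.e. global minimizers of \eqref{loss function with parameter bc} with loss value $0$. Because $\rho$ is a genuinely $2$-dimensional framework while its anchors are colinear, at least one sensor lies off $\ell$, whence $Rs\neq s$.

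Next I would upgrade ``solution'' to ``isolated strict local minimizer''. By the definition of local rigidity, $s$ is an isolated solution of the SNL problem of $\rho$, hence an isolated zero of the nonnegative function $\operatorname{loss}_\rho$; consequently there is $\delta_0>0$ with $\operatorname{loss}_\rho(x)>0=\operatorname{loss}_\rho(s)$ for all $x$ in the punctured ball $\bar B(s,\delta_0)\setminus\{s\}$. Applying $R$, a homeomorphism under which $\operatorname{loss}_\rho$ is invariant, gives the analogous statement at $Rs$. Fix $\delta\in(0,\delta_0)$ small enough that $\bar B(s,\delta)$ and $\bar B(Rs,\delta)$ are disjoint and each lies inside the corresponding isolating neighbourhood; on each sphere $\partial B(s,\delta)$ and $\partial B(Rs,\delta)$ the continuous $\operatorname{loss}_\rho$ then attains a strictly positive minimum $\mu>0$. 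This positive ``moat'' is the quantitative handle driving the perturbation argument.

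Then I would run the stability step. The loss function of $\tau$ differs from that of $\rho$ only through the perturbed anchor positions and the resulting anchor-to-sensor target distances, both of which depend continuously on the perturbation; since every summand of \eqref{loss function with parameter bc} is jointly continuous and $\bar B(s,\delta)\cup\bar B(Rs,\delta)$ is compact, $\operatorname{loss}_\tau\to\operatorname{loss}_\rho$ uniformly on this set as the perturbation tends to $0$. Choosing the perturbation small enough that the uniform difference is below $\mu/3$, a three-$\varepsilon$ comparison gives $\operatorname{loss}_\tau(s)<\mu/3$ while $\operatorname{loss}_\tau(x)>2\mu/3$ for every $x\in\partial B(s,\delta)$. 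Hence the minimum of the continuous $\operatorname{loss}_\tau$ over the compact ball $\bar B(s,\delta)$ is attained in its interior, which is automatically a local minimizer of $\operatorname{loss}_\tau$; the identical reasoning yields one inside $B(Rs,\delta)$. As the two balls are disjoint, these minimizers are distinct, so $\operatorname{loss}_\tau$ has at least two local minimizers.

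I expect the main obstacle to be this stability step rather than the symmetry. One must verify that local rigidity really delivers an isolated zero surrounded by a strictly positive moat, so that the minimizer is topologically stable even though $\operatorname{loss}$ is non-smooth exactly at the solution (where every absolute-value term vanishes simultaneously), and that $\operatorname{loss}_\tau\to\operatorname{loss}_\rho$ is genuinely uniform on the relevant compact set. A secondary point to pin down is the genericity guaranteeing $Rs\neq s$, which I would justify from $\rho$ being $2$-dimensional with colinear anchors (forcing at least one sensor off $\ell$); were all sensors allowed to lie on $\ell$, the reflection would be trivial and a different mechanism would be required.
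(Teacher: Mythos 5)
Your proposal is correct and follows essentially the same route as the paper's proof: the mirror image of the true solution across the anchor line gives a second global minimizer, local rigidity makes both minimizers isolated so that small balls around them have a strictly positive infimum of the loss on their boundaries, and a continuity/stability argument shows a local minimizer survives inside each ball after a sufficiently small anchor perturbation (the paper runs the moat argument only at the mirror point $y_2$, since the true solution remains an exact zero of $\operatorname{loss}_\tau$, whereas you symmetrically treat both balls via uniform convergence --- a harmless variation). Note that both your argument and the paper's tacitly require $Rs\neq s$, i.e.\ at least one sensor off the anchor line; you flag this honestly, and it is in fact an unstated hypothesis of the theorem rather than a defect specific to your write-up.
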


\begin{proof}
    Apparently, there are at least $2$ global minimizers of $\rho$'s loss function.
    One is the true solution, which is denoted by $y_1$.
    The other is the mirror image of the true solution with respect to the line where anchors lie, which is denoted by $y_2$.
    Because $\rho$ is locally rigid, the two global minimizers are isolated.
    Therefore, there exist two small disjoint balls centered at $y_1$ and $y_2$.
    The two balls' boundary function values have a strictly positive infimum.
    We denote the ball centred at $y_2$ by $B_2$.
    Upon applying a slight perturbation, both the function value at $y_2$ and the infimum of the function values at $\partial B_2$ continuously change. 
    Therefore, if the perturbation is sufficiently small, the function value at $y_2$ will remain smaller than the infimum of the function values at $\partial B_2$.
    This implies the presence of at least one local minimizer within $B_2$.
    Coupled with the persistent global minimizer $y_1$, we can conclude that there are still at least two local minimizers of the loss function of the new framework $\tau$.
\end{proof}
This theorem suggests that for the framework whose anchors are almost colinear, its loss function exhibits unfavorable properties. 
Actually, the almost colinearity of the sensors can contribute to more local minimizers.
In numerous cases, some of the anchors and sensors are almost colinear, which contributes to the common bad loss function.

\subsection{Non-Convexity of Direct Dimension Augmentation}
\label{Non-Convexity of Direct Dimension Augmentation}
Recall that the domain and codomain of the loss function \eqref{loss function with parameter bc} are
$$
\operatorname{loss}\left(x\right):\mathbb{R}^{dn}\to \mathbb{R}.
$$
If we directly augment its dimension to $n(n+d)$, the new loss function $\tilde{\operatorname{loss}}:\mathbb{R}^{n(n+d)}\to \mathbb{R}$ will be
\begin{equation}
\label{new_loss_function_with_augmented_dimension}
    \tilde{\operatorname{loss}}\left(x\right)=\sum_{\{i,j\} \in N_x}|\Vert  x_i-x_j\Vert ^b-d_{i j}^b|^c+\sum_{(i, j) \in N_a}|\Vert a_i-x_j\Vert ^b-\tilde{d}_{ij}^b|^c,
\end{equation}
where $x_i\in \mathbb{R}^{n+d}$ are the variables.
We augment the loss function to $(n+d)$-dimension because the SDR method finds the high-dimensional solution in $(n+d)$-dimension.
Theorem \ref{main_theorem} also holds for this new loss function.
This suggests that the new loss function \eqref{new_loss_function_with_augmented_dimension} is also often non-convex.

\begin{thm}\label{main_theorem_high_dimensional}
    For a unit-disk SNL case $(U,r,n, a)$, assume that the region $U$ is connected, open, and bounded.
    Then for any given set of anchors $a$, radius $r>0$, and probability $p_0<1$, there exists $N\in\mathbb{N}$ such that for all number of sensors $n>N$, the probability that the new loss function \eqref{new_loss_function_with_augmented_dimension} of the SNL problem in the unit-disk SNL case $(U,r,n,a)$ is not convex is larger than $p_0$.
    Specifically, a lower bound on the probability is $1-nv\tbinom{n-1}{v-1}(1-p_r)^{n-v}$.
    The definitions of $v$ and $p_r$ remain consistent with those in Theorem \ref{main_theorem}.
\end{thm}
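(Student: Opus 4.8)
The plan is to observe that the proof of Theorem \ref{main_theorem} transfers almost verbatim, because direct dimension augmentation by zero-padding changes neither the sufficient condition for non-convexity nor any of the distances on which the probabilistic estimate relies.

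First I would re-examine Lemma \ref{lemma_of_main_theorem_sufficient_condition_of_non_convex} in the augmented setting. Its proof uses only three collinear points $s$, $z=0$, $y=-s$ together with the midpoint identity $z=\frac{1}{2}s+\frac{1}{2}y$; none of this is special to the ambient dimension. For the augmented loss function \eqref{new_loss_function_with_augmented_dimension} I take these three points in $\mathbb{R}^{n(n+d)}$, letting $s$ be the zero-padded embedding of the true sensor configuration. The essential point is that the augmented anchors are the original anchors padded with zeros, so zero-padding is an isometry onto the original coordinate subspace: every distance entering the argument --- the given distances $d_{ij}$ and $\tilde{d}_{ij}$, and the mirror distances $\Vert a_i+s_j\Vert$ --- equals its value in the original low-dimensional framework. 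In particular $s$ is still a genuine solution, so $\tilde{\operatorname{loss}}(s)=0$, and the evaluations of $\tilde{\operatorname{loss}}$ at $z$ and at $y$ coincide with those of $\operatorname{loss}$ in the low-dimensional case. Consequently the same sufficient condition \eqref{non_convex_sufficient_condition} guarantees non-convexity of \eqref{new_loss_function_with_augmented_dimension}, and it is the very same inequality, in the very same quantities, as in Theorem \ref{main_theorem}.

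Next I would invoke the probabilistic half of the proof of Theorem \ref{main_theorem} unchanged. That argument constructs the events $A_i$ measuring how many sensors land in the annulus of radii $\varepsilon r$ and $r$ about $s_i$, lower-bounds $\sum_{\{i,j\}\in N_x}d_{ij}^{bc}$ on $A_1\cap\cdots\cap A_n$, controls $P(\overline{A_1})$ through the union bound and Lemma \ref{lemma_by_derivation}, and upper-bounds the right-hand side of \eqref{non_convex_sufficient_condition} by $nm(2D)^{bc}$ via the diameter $D$ of the region. Every one of these steps refers only to the sensor and anchor positions of the generating framework and the induced distances --- exactly the data that zero-padding leaves invariant. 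Hence the identical computation applies and yields the identical lower bound $1-nv\tbinom{n-1}{v-1}(1-p_r)^{n-v}$ on the probability of non-convexity, with $v=\lfloor\sqrt{n}\rfloor$ and the same constant $p_r$.

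There is no genuinely new difficulty here; the only thing that must be checked carefully is the isometry claim of the second paragraph --- that augmenting the anchors with zero coordinates preserves the relevant norms and keeps the true configuration a zero of the loss. Once that is confirmed, the sufficient condition and the entire tail estimate are literally those of Theorem \ref{main_theorem}, so the same threshold $n>N$ furnished there forces the probability to exceed any prescribed $p_0<1$.
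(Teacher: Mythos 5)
Your proposal is correct and follows essentially the same route as the paper: the paper's own (very brief) proof likewise reduces to Theorem \ref{main_theorem} by noting that the augmented loss \eqref{new_loss_function_with_augmented_dimension} restricted to the zero-padded $nd$-dimensional subspace coincides with the original loss, so non-convexity there implies non-convexity on all of $\mathbb{R}^{n(n+d)}$ with the same probability bound. You merely make explicit what the paper leaves implicit --- that the violating triple $s$, $z=0$, $y=-s$ from Lemma \ref{lemma_of_main_theorem_sufficient_condition_of_non_convex} lives in that subspace and that zero-padding preserves all relevant distances, so the identical sufficient condition and tail estimate apply.
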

\begin{proof}
    Due to the definition of the new loss function \eqref{new_loss_function_with_augmented_dimension} and Theorem \ref{main_theorem}, there exists $N\in\mathbb{N}$ such that for all number of sensors $n>N$, the probability that the new loss function \eqref{new_loss_function_with_augmented_dimension} of the generated framework is not convex in its $nd$-dimensional subspace is larger than $1-nv\tbinom{n-1}{v-1}(1-p_r)^{n-v}$, so the new loss function is not convex in its entire $n(d+n)$-dimensional space with probability larger than $1-nv\tbinom{n-1}{v-1}(1-p_r)^{n-v}$.
    This finishes the proof.
\end{proof}

The above theorem indicates that the new loss function \eqref{new_loss_function_with_augmented_dimension} is often non-convex, which is similar to the original loss function \eqref{loss function with parameter bc}.
This suggests that directly augmenting the dimension of the loss function may not be an effective way to augment the dimension and extract more information, which indicates the failure of the first-order dimension augmentation.

\section{Convexity of the High-Dimensional Optimization}
\label{Convexity of the High-Dimensional Optimization}

In 2004, Biswas and Ye \cite{by2004} proposed a semi-definite relaxation (SDR) algorithm based on semi-definite programming (SDP) to solve the SNL problem. 
The SDR method is the high-dimensional optimization we mainly discuss in this paper.
Since then, the SDR method has emerged as a promising approach to solving the SNL problem. 

As a product-type dimension augmentation of the former loss function method, the optimization problem of the SDR method is always a convex optimization, which we will show in Subsection \ref{SDR_convex}.
In contrast, the direct first-order dimension augmentation of the loss function is often non-convex, as shown in Subsection \ref{Non-Convexity of Direct Dimension Augmentation}.

\subsection{Formulation of the SDR method}
\label{SDR_convex}
We recall that an SNL problem \eqref{solve_the_SNL_problem} is to find a realization $x_1, \dots, x_n \in \mathbb{R}^d$, such that 
$$
\begin{cases}
    \Vert a_i - x_j\Vert  = \tilde d_{ij}\quad \forall (i,j) \in N_a, \\ 
    \Vert x_i - x_j\Vert  = d_{ij} \quad \forall \{i,j\} \in N_x.
\end{cases}
$$

Denote by $X = (x_1, x_2, \dots, x_n)$ the $d\times n$ matrix of the positions of all sensors.
Let $Y=X^TX$.
Denote by $Y_{ij}$ the $(i,j)$ entry of the matrix $Y$.
We have
\begin{equation}
\label{LinksBetweenCoordinateAndXY}
    \Vert a_i-x_j\Vert ^2 = a_i^Ta_i-2a_i^Tx_j+Y_{jj}, \quad
    \Vert x_i-x_j\Vert ^2 = Y_{ii}-2Y_{ij}+Y_{jj}.
\end{equation}
Equation \eqref{LinksBetweenCoordinateAndXY} suggests that the terms in the loss function \eqref{loss_function} can be rewritten as 
$|Y_{ii}-2Y_{ij}+Y_{jj}-d_{i j}^2|$ or $|a_i^Ta_i-2a_i^Tx_j+Y_{jj}-\tilde{d}_{ij}^2|
$.
Therefore, we can augment the dimension of the original loss function.
A specific form of the dimension augmentation of the loss function \eqref{loss_function} is given by
\begin{align}
\begin{split}
    \text{minimize}\quad &
        \sum_{\{i,j\} \in N_x}|Y_{ii}-2Y_{ij}+Y_{jj}-d_{i j}^2|+\sum_{(i, j) \in N_a}|a_i^Ta_i-2a_i^Tx_j+Y_{jj}-\tilde{d}_{ij}^2|\\
    \text{subject to} \quad &
    Y=X^TX.
\end{split}
\label{y=xtx}
\end{align}
One key step of the semi-definite relaxation (SDR) is relaxing the last equation to $Y\succeq X^TX$. 
Here is the formulation of the relaxed optimization problem 
\begin{equation}
\label{SDRYX}
\begin{aligned}
    \text{minimize}\quad &
        \sum_{\{i,j\} \in N_x}|Y_{ii}-2Y_{ij}+Y_{jj}-d_{i j}^2|+\sum_{(i, j) \in N_a}|a_i^Ta_i-2a_i^Tx_j+Y_{jj}-\tilde{d}_{ij}^2|\\
    \text{subject to} \quad &
    Y\succeq X^TX.
\end{aligned}
\end{equation}

To further develop the problem structure, we introduce the notation:
\begin{equation}
\label{zxy}
Z: = \begin{pmatrix}
I_d & X \\
X^T & Y \\
\end{pmatrix}.
\end{equation}
It is shown in \cite[p.~28]{Boyd94} that $Z \succeq 0$ if and only if $Y - X^TX \succeq 0$.
As a result, the optimization problem \eqref{SDRYX} is equivalent to the following form:
\begin{equation}
\label{SDRZ}
\begin{aligned}
\text{minimize}\quad &
\sum_{{i,j} \in N_x}|Z_{(i+d)(i+d)}-2Z_{(i+d)(j+d)}+Z_{(j+d)(j+d)}-d_{i j}^2| \\
&+\sum_{(i, j) \in N_a}|a_i^Ta_i-2a_i^TZ_{(1:d)j}+Z_{(j+d)(j+d)}-\tilde{d}{ij}^2|\\
\text{subject to} \quad &
Z_{1:d, 1:d} = I_d, \quad Z\succeq 0.
\end{aligned}
\end{equation}
The objective function of \eqref{SDRZ} is convex with respect to $Z$. 
This convexity is demonstrated in Figure \ref{Landscape of SDR loss function of 3 anchors 1 sensor}, which examines the same example as Figure \ref{Landscape of loss function of 3 anchors 1 sensor}.
The true locations of the three anchors are $(-1, 0)$, $(1, 0)$ and $(0, 0.4)$ and the true location of the sensor is $(0, 1)$. 
In this example, $Z$ contains three independent variables: $X\in \mathbb{R}^2$ and $Y\in\mathbb{R}$. 
The constraint in \eqref{SDRZ} can be expressed as $Y\geq X^TX$. 
To visualize the loss landscape of \eqref{SDRZ} over $\mathbb{R}^2$, we fix $X$ and minimize the loss function subject to the given constraint, treating $Y$ as the independent variable. 
The resulting landscape is depicted in Figure \ref{Landscape of SDR loss function of 3 anchors 1 sensor}. 
\begin{figure}[H]
   \begin{minipage}{0.44\textwidth}
     \centering
     \includegraphics[width=\linewidth]{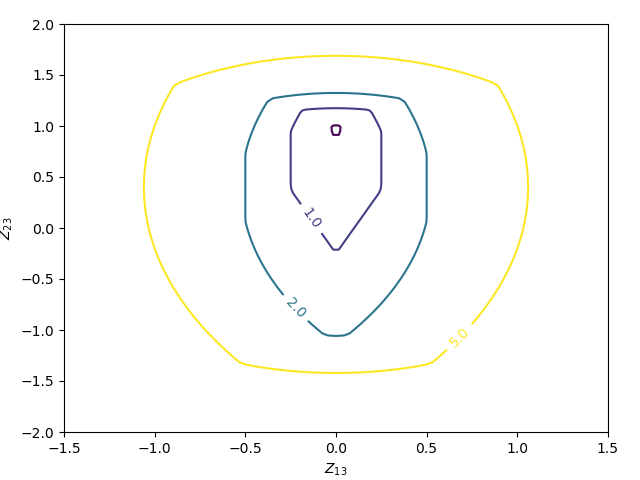}
   \end{minipage}\hfill
   \begin{minipage}{0.55\textwidth}
     \centering
     \includegraphics[width=\linewidth]{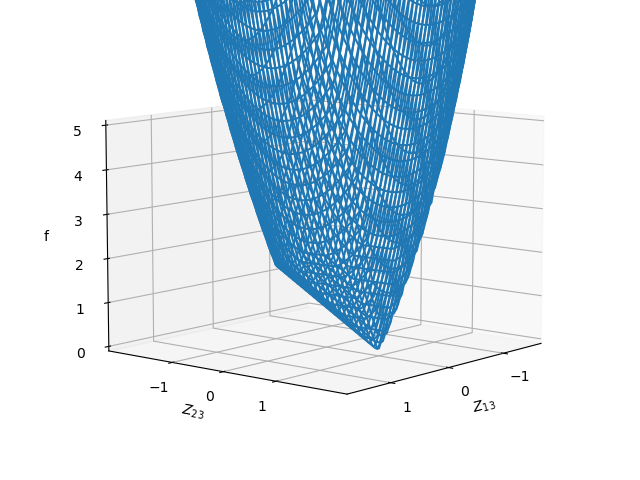}
   \end{minipage}
    \caption{Landscape of SDR loss function of 3 anchors 1 sensor}
    \label{Landscape of SDR loss function of 3 anchors 1 sensor}
\end{figure}
The following paragraphs of this subsection aim to show that the optimization problem \eqref{SDRZ} is equivalent to the optimization problem \eqref{sdr_usual_form}. 
Moreover, the optimization problem \eqref{sdr_usual_form} is a semi-definite programming (SDP) problem.
The SDP problem \eqref{sdr_usual_form} can be readily solved by employing a solver, enabling us to obtain the solution directly. 
We denote
$$
\tilde W_{ij} := a_i^Ta_i-2a_i^Tx_j+Y_{jj}-\tilde{d}_{ij}^2,\quad 
W_{ij}:= Y_{ii}-2Y_{ij}+Y_{jj}-d_{i j}^2.
$$
Denote the $j$th standard basis of $\mathbb{R}^n$ by $e_j$.
By direct calculation, we have
$$
\begin{aligned}
\tilde W_{ij} = 
a_i^Ta_i-2a_i^Tx_j+Y_{jj}-\tilde{d}_{ij}^2 = \begin{pmatrix}
    a_i^T & -e_j^T
    \end{pmatrix}
    Z
    \begin{pmatrix}
    a_i\\
    -e_j
    \end{pmatrix} - \tilde d_{ij}^2, \\
W_{ij}= 
    Y_{ii}-2Y_{ij}+Y_{jj}-d_{i j}^2 = \begin{pmatrix}
            0 & e_i^T-e_j^T
            \end{pmatrix}
            Z
        \begin{pmatrix}
            0\\
            e_i-e_j
        \end{pmatrix} - d_{ij}^2.
\end{aligned}
$$
We denote
$$
\begin{aligned}
    \tilde w_{ij}^+ = \max (\tilde W_{ij}, 0), \quad
    \tilde w_{ij}^- = -\min (\tilde W_{ij}, 0), \\
    w_{ij}^+ = \max (W_{ij}, 0), \quad
    w_{ij}^- = -\min (W_{ij}, 0). 
\end{aligned}
$$
It follows that \eqref{SDRZ} is equivalent to

\begin{align}
\begin{split}
\label{sdr_usual_form}
    \text{minimize}\quad & \sum_{\{i,j\} \in N_x}(w_{ij}^+ + w_{ij}^-)+\sum_{(i, j) \in N_a}(\tilde{w}_{ij}^+ + \tilde{w}_{ij}^-)\\
    \text{subject to}\quad  
    & \begin{pmatrix}
    a_i^T & -e_j^T
    \end{pmatrix}
    Z
    \begin{pmatrix}
    a_i\\
    -e_j
    \end{pmatrix} - \tilde d_{ij}^2 = \tilde w_{ij}^{+} - \tilde w_{ij}^{-}, \quad
    \forall (i,j) \in N_a\\
        & \begin{pmatrix}
            0 & e_i^T-e_j^T
            \end{pmatrix}
            Z
        \begin{pmatrix}
            0\\
            e_i-e_j
        \end{pmatrix} - d_{ij}^2 = w_{ij}^{+} - w_{ij}^{-}, \quad \forall \{i,j\} \in N_x\\
        & w_{ij}^{+}, w_{ij}^{-} \geq 0, \quad \forall (i,j) \in N_a\\
        & \tilde w_{ij}^{+}, \tilde w_{ij}^{-} \geq 0, \quad \forall \{i,j\} \in N_x \\
        & Z_{1:d, 1:d} = I_d,\ Z \succeq 0.
\end{split}
\end{align}

We note that \eqref{sdr_usual_form} is a convex optimization problem.
Specifically, it is a semi-definite programming (SDP) problem.
We can also see similar reformulations of \eqref{solve_the_SNL_problem} as convex optimization problems in \cite{by2004, biswas_five_people_2006, semidefiniterelax, so2007, wang2008}.
In this section, the SNL problem \eqref{solve_the_SNL_problem} is reformulated as the optimization problem \eqref{sdr_usual_form} by delicately increasing the dimension of the problem, leading to convexity, solvability, and other favorable properties of the high-dimensional optimization problem. 

Furthermore, using similar techniques, we can do a second-order dimension augmentation to the optimization problem to minimize the loss function \eqref{loss function with parameter bc} with $b=c=2$, which is a multivariate polynomial. 
This reformulation transforms the problem into an SDP problem with a convex quadratic objective function. 
Consequently, it offers an alternative approach to \cite{lasserre2007sum, lasserre2009moments} for accomplishing a second-order dimension augmentation of the SNL problem's polynomial loss function.

\subsection{Properties of the Solution Matrix}\label{properties of the solution matrix}

A fundamental theoretical result of the SDR method was found by So and Ye in \cite[Theorem 2]{so2007}, which showed that a semi-definite programming solver based on the interior point method finds the true positions of the sensors if and only if the framework is universally rigid.
Additionally, in the unit-disk SNL case, which is common in practice, the probability that the graph is a trilateration and the probability that the framework is universally rigid were estimated by \cite{Rigidity_computation_and_randomization_in_network_localization} and \cite{shamsi_taheri_zhu_ye_2013} respectively.
It was shown in \cite{zhu2010universal, sophd} that if the graph of a framework is a trilateration, the framework will be universally rigid. 
Furthermore, results from \cite{Rigidity_computation_and_randomization_in_network_localization} and \cite{shamsi_taheri_zhu_ye_2013} indicated that it is usual for the SDR method to find the true solution to the SNL problem directly. 

We give the geometric meaning of the solution given by the SDP solver in this paragraph.
We decompose $Z$ to obtain $X$ and $Y$ by equation \eqref{zxy}. 
If $\text{rank}(Z) = d$, by knowledge of linear algebra, $Y=X^TX$.
In this case, following \eqref{y=xtx}, $X$ is a solution to the SNL problem \eqref{solve_the_SNL_problem}. 
If $\text{rank}(Z) \neq d$, our solution is indeed a solution to the relaxed problem \eqref{SDRYX} rather than \eqref{y=xtx}. 
Since $Z\succeq 0$, we have $Y - X^TX \succeq 0$ by knowledge of linear algebra. 
It follows that there exists $0 \neq X_2 \in \mathbb{R}^{n\times n}$ such that $X_2^TX_2 = Y - X^TX$. 
It was suggested by So in \cite[Section 3.4.2]{sophd} that
\begin{equation}
\label{x1x2}
    \begin{pmatrix}
        X \\
        X_2
    \end{pmatrix}
\end{equation}
is a solution to the $(n+d)$-dimensional SNL problem.

If the framework that generates the SNL problem is not universally rigid, we can implement some other methods to get the true solution from the high-dimensional solution that we obtained.
Matrix $Z$ in \eqref{sdr_usual_form} and matrix \eqref{x1x2}, are two forms of the high-dimensional solution. 
We may take $Z_{1:d, (d+1):(d+n)}$, i.e., $X$ in \eqref{x1x2}, as the solution directly. 
In addition, we can do gradient descent using $Z_{1:d, (d+1):(d+n)}$ as a warm-start to minimize the loss function, which will be discussed in Section \ref{High-dimensional Solution as a Warm-Start of the Low-dimensional Optimization}. 
We can also construct new regularization by $Z$ to restart the SDP solver, which was reported to have a large improvement; see \cite{biswas_five_people_2006} for details. 

As a conclusion, as stated in Section \ref{Introduction}, under certain circumstances (universally rigid), the finite convergence (second-order convergence) of the dimension augmentation is guaranteed; i.e., if the framework is universally rigid, the second-order dimension augmentation, in other words the SDR method, will find the same optimal point as the low-dimensional loss function method.

\section{Explanation for the Success of the SDR+GD Method}
\label{High-dimensional Solution as a Warm-Start of the Low-dimensional Optimization}
If the SDR solution is considered as a warm-start of the minimizing loss function method, the probability to find the true low-dimensional solution will be greatly enhanced. 
This phenomenon was observed and stated in \cite{biswas_lian_wang_ye_2006, biswas_five_people_2006}, but only an empirical explanation is provided.

The authors use the SDR solution as the initial value for minimizing the loss function \eqref{loss function with parameter bc} with $b=c=2$. 
To achieve this optimization, they employ the gradient descent algorithm, a widely-used iterative method for minimizing differentiable functions. 
This process involves subtracting the gradient of the current point at each step, with the option of incorporating a step size factor. 
It is important to note that in the case of a non-convex objective function, gradient descent may converge to local minima, underscoring the significance of selecting an appropriate starting point.
Alternatively, other optimization algorithms, like steepest descent, can be employed as well; see, e.g., \cite{wang2008}.

In this section, we will show our more rigorous explanation for the success of the regularized SDR+GD (gradient descent) method in \cite{biswas_five_people_2006, biswas_lian_wang_ye_2006}.

Denote the set of all solutions of an SNL problem $\xi_d$ by $S(\xi_d)$.
If we increase the dimension of the SNL problem $\xi_d$ from $d$ to $d'$, we denote the new high-dimensional SNL problem by $\xi_{d'}$.
We will prove that there exists a simple smooth path with constant edge lengths joining every two solutions of one SNL problem in a higher dimension due to \cite{bezdek2004kneser}.
This means that every point on the simple smooth path is a solution to the high-dimensional SNL problem.

The construction of the simple smooth path was from a lemma which is used to prove the Kneser--Poulsen conjecture in \cite{bezdek2004kneser}.
\begin{thm}
    Suppose that ${p}=({p_1},{p_2},\dots,{p_n})$ and ${q}=({q_1},{q_2},\dots,{q_n})$ are two solutions to a $d$-dimensional SNL problem $\xi_d=(G,d,\tilde{d},a)$. 
    Then the following path ${p}(t):[0,1]\to \mathbb{R}^{2nd}$ joins ${p}$ and ${q}$,
    \begin{align*}
    &{p}_i(t):=\left(\frac{{p}_i+{q}_i}{2}+(\cos \pi t) \frac{{p}_i-{q}_i}{2},(\sin \pi t) \frac{{p}_i-{q}_i}{2}\right), \quad 1 \leq i \leq n,\\
    &p(t):=(p_1(t),p_2(t),\dots,p_n(t)).
    \end{align*}
    Additionally, every point on the path is a solution to the $2d$-dimensional SNL problem $\xi_{2d}$; i.e., $S(\xi_{2d})$ contains the path.
\end{thm}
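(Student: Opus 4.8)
The plan is to verify the two assertions directly: that the endpoints of $p(t)$ recover $p$ and $q$, and that every intermediate configuration $p(t)$ satisfies all the distance constraints \eqref{solve_the_SNL_problem} of the augmented problem $\xi_{2d}$. For the endpoints, substituting $t=0$ (where $\cos\pi t=1$, $\sin\pi t=0$) gives $p_i(0)=(p_i,0)$, and substituting $t=1$ (where $\cos\pi t=-1$, $\sin\pi t=0$) gives $p_i(1)=(q_i,0)$; these are exactly the direct-augmentation embeddings of the two low-dimensional solutions into $\mathbb{R}^{2d}$, obtained by appending zeros in the new coordinates. Thus the path joins $p$ and $q$.

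The crux is the interior case, and the key structural fact I would exploit is that \emph{both} $p$ and $q$ solve the same problem $\xi_d$, so the corresponding edge vectors have equal lengths. For a sensor-sensor edge $\{i,j\}\in N_x$, set $u:=p_i-p_j$ and $v:=q_i-q_j$, so that $\Vert u\Vert=\Vert v\Vert=d_{ij}$. A short computation shows that the first $d$ coordinates of $p_i(t)-p_j(t)$ equal $\tfrac{u+v}{2}+(\cos\pi t)\tfrac{u-v}{2}$ while the last $d$ coordinates equal $(\sin\pi t)\tfrac{u-v}{2}$. Expanding $\Vert p_i(t)-p_j(t)\Vert^2$, the cross term produced by the first block is proportional to $\langle u+v,\,u-v\rangle=\Vert u\Vert^2-\Vert v\Vert^2$, which vanishes precisely because $\Vert u\Vert=\Vert v\Vert$. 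What survives is $\bigl\Vert\tfrac{u+v}{2}\bigr\Vert^2+(\cos^2\pi t+\sin^2\pi t)\bigl\Vert\tfrac{u-v}{2}\bigr\Vert^2$; using $\cos^2+\sin^2=1$ together with the parallelogram law this collapses to $\tfrac12(\Vert u\Vert^2+\Vert v\Vert^2)=d_{ij}^2$, which is independent of $t$.

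For an anchor-sensor edge $(i,j)\in N_a$ I would run the identical argument after recalling that direct augmentation embeds each anchor as $(a_i,0)\in\mathbb{R}^{2d}$. Writing $w:=a_i-p_j$ and $w':=a_i-q_j$, both of norm $\tilde d_{ij}$ since $p$ and $q$ are solutions, the vector $(a_i,0)-p_j(t)$ has exactly the same form as before with $(u,v)$ replaced by $(w,w')$, so the same cancellation and trigonometric identity give $\Vert(a_i,0)-p_j(t)\Vert=\tilde d_{ij}$ for all $t$. Hence every $p(t)$ satisfies all the constraints of $\xi_{2d}$; i.e., $S(\xi_{2d})$ contains the path.

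There is no deep obstacle here: the entire argument is a verification, and the only point that genuinely has to be gotten right is the vanishing of the cross term, which hinges on the equal-length property $\Vert u\Vert=\Vert v\Vert$ guaranteed by $p$ and $q$ both solving $\xi_d$ — this is exactly what makes the edge lengths constant in $t$. Smoothness is immediate, since $p(t)$ is built from $\cos\pi t$, $\sin\pi t$, and fixed vectors; and simplicity follows by observing that each sensor traces a half-circle of radius $\tfrac12\Vert p_i-q_i\Vert$ about the midpoint $\bigl(\tfrac{p_i+q_i}{2},0\bigr)$ as the angle $\pi t$ sweeps $[0,\pi]$, so whenever $p\neq q$ some coordinate is strictly injective in $t$ and the full path cannot self-intersect.
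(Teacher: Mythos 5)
Your proof is correct and takes essentially the same route as the paper's, which is a direct verification that the endpoints recover $p$ and $q$ and that all edge lengths are constant along the path. The only difference is one of completeness: the paper merely asserts that the verification is ``not difficult,'' whereas you carry it out explicitly (isolating the vanishing cross term $\langle u+v,\,u-v\rangle=\Vert u\Vert^2-\Vert v\Vert^2=0$ and invoking the parallelogram law) and additionally justify smoothness and simplicity, which the paper's proof does not address.
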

\begin{proof}
    It is not difficult to directly verify that the path joins ${p}$ and ${q}$, and for all $t\in[0,1]$ and $\{i,j\}\in N_x$, $\Vert {p}_i(t)-{p}_j(t)\Vert $ is constant.
    Similarly, it is not difficult to directly verify that for all $t\in[0,1]$ and $(i,j)\in N_a$, $\Vert a_i- {p}_j(t)\Vert $ is constant.
    Therefore, ${p}(t)$ always satisfies the distance condition.
    This means that ${p}(t)$ is always a solution to the $2d$-dimensional SNL problem $\xi_{2d}$.
\end{proof}
Then for an SNL problem, if the SDR solution ${p}$ is a $d'$-dimensional solution where $d'>d$, there exists a simple smooth path in $S(\xi_{2d'})$ joining the high-dimensional solution ${p}$ and the true solution ${p_0}$. 

As regularization can be included in the SDR method, the solution will minimize the regularized objective function.
In some cases, if the direction along which the regularization decreases is similar to the projection direction, due to the path connectedness, the solution found by SDR may be close to the true $d$-dimensional solution.
Finally, we choose the projection of the high-dimensional solution as the warm-start for the loss function method.
Since the projection is close to the global minimizer, the gradient descent method may avoid converging to other local minimizers of the loss function that are far from the global minimizer.

Specifically, the regularization provided by \cite{biswas_five_people_2006, biswas_lian_wang_ye_2006} demonstrates superior performance compared to other baseline methods.
The key insight lies in the fact that the regularization is designed to maximize the sum of distances between unconnected nodes, thereby minimizing the length in the projection direction.
The connectedness between the high-dimensional solution and the true solution ensures the possibility of the regularization effectively guiding the SDR method toward the true solution.
Hence, the utilization of effective regularization techniques is crucial in achieving favorable outcomes in this context.

\section{Comparison with the Neural Network}
\label{Comparison to the Neural Network}
In the context of the SNL problem, our findings demonstrate the consistent superiority of the high-dimensional SDR method over the low-dimensional approach. 
As previously discussed, the high-dimensional method outperforms the low-dimensional method in the SNL problem, which minimizes the loss function in $\mathbb{R}^d$.
Moreover, existing literature (e.g., \cite{biswas_five_people_2006, biswas_lian_wang_ye_2006}) suggests that utilizing the high-dimensional solution as a warm-start for the low-dimensional method yields even better performance than both the low-dimensional and high-dimensional methods in the SNL problem. 
This highlights the ability of high-dimensional models to enhance low-dimensional optimization by providing an advantageous warm-start.

Interestingly, similar phenomena can be observed in the field of neural networks, where researchers and practitioners have increasingly focused on developing larger and more complex architectures within the realm of deep learning. 
Notable examples include AlphaFold, AlphaGo, ResNet, GPT-3, Transformer, and VGG \cite{alphafold, alphago, resnet, gpt3, transformer, vgg}. 
This shift toward larger neural networks indicates their ability to achieve high performance across various tasks. 
Additionally, large neural networks can serve as effective initializations, providing a warm-start for new networks in different tasks. 
This process is commonly referred to as transfer learning.

In the following paragraphs, we will show the high performance of larger neural networks and their capability to facilitate transfer learning.

Larger neural network models exhibit superior performance primarily due to their capacity to generalize and their capability to extract diverse and expressive features. This characteristic results in improved performance and generalization on a given task, as well as on new, unseen data. The effectiveness of larger models has been demonstrated by neural networks such as VGG, ResNet, DenseNet, and large-scale Transformer models like BERT and GPT-3 \cite{vgg, resnet, densenet, bert, gpt3}.

Furthermore, larger models hold promise for facilitating transfer learning in various domains, as evidenced by studies such as \cite{tl_survey_2019}. Transfer learning involves fine-tuning or extracting features from a pre-trained model on a specific task or domain. Pre-trained transformers like BERT \cite{tl_by_bert, language_finetuning} and large CNNs such as ResNet \cite{tl_by_resnet} have achieved state-of-the-art performance on diverse natural language processing tasks with relatively small amounts of task-specific data.

In the following sections, we will first discuss the blessing of high-dimensional models, emphasizing their exceptional ability to extract more information from the problem and their generalization ability. 
Subsequently, we will explore the capability of high-dimensional models to serve as warm-starts for new models.

\subsection{Blessing of a High-Dimensional Model}

The blessing of high-dimensional models can be observed in other applications of SDP and designs of neural networks.
Augmenting the dimension of the problems can extract more information from the problems and make the process of optimization easier. 
Therefore, if the problem is difficult, increasing the dimension of the problem with a possible relaxation may reach an acceptable high-dimensional solution.
This leaves the possibility to project it onto a lower-dimensional space and get a solution with better accuracy, which was suggested by \cite{biswas_five_people_2006, biswas_lian_wang_ye_2006}.
Therefore, in neural networks and optimization, high-dimensional models have more generalization ability.

For instance, the semi-definite relaxation of the max-cut problem (see \cite{maxcut_gwalgorithm}) and the sensor network localization (SNL) problem both follow a paradigm of second-order dimension augmentation, including increasing the dimension of the problem, relaxing the problem, obtaining a warm-start from the high-dimensional solution and using it for better optimization. 
Increasing the dimension of the problem can lead to a better quality of the solution, which was suggested by \cite{biswas_five_people_2006, biswas_lian_wang_ye_2006}. 
Here is a flow chart of the SDR method in the SNL problem, where $z$ is the padded vector of $Z=\text{mat}(z)$, $\text{PSD}^{n+d}$ is the cone of padded positive semi-definite $(n+d)\times (n+d)$ matrices, and $A,g$ are calculated by the SNL problem. 
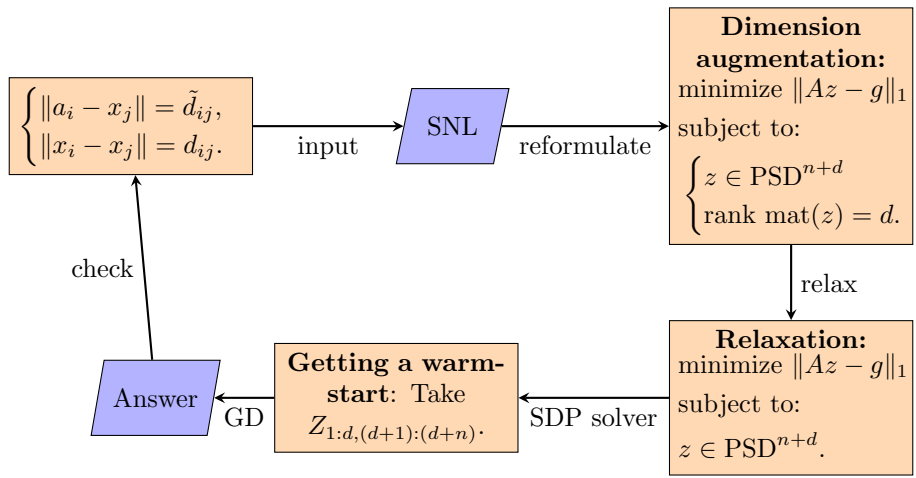
\begin{figure}[H]
\centering
\begin{tikzpicture}[node distance=2cm]

\node (prob) [process]{
$
\begin{cases}
\Vert a_i-x_j\Vert  = \tilde{d}_{ij},\\
\Vert x_i-x_j\Vert  = d_{ij}.
\end{cases}
$
};

\node (in1) [io, right =of prob]
{SNL};
\node (pro1) [process, right of=in1, xshift=2.5cm] 
{\textbf{Dimension augmentation:}
$
\begin{aligned}
&\text{minimize } \Vert Az-g\Vert_1\\
&\text{subject to:}\\
&\begin{cases}
    z \in \text{PSD}^{n+d} \\
    \text{rank mat} (z) = d.
\end{cases}
\end{aligned}
$
};
\node (pro2) [process, below =of pro1, yshift=1cm] 
{\textbf{Relaxation:}  
$
\begin{aligned}
&\text{minimize } \Vert Az-g\Vert_1\\
&\text{subject to:}\\
& z \in \text{PSD}^{n+d}. \\
\end{aligned}
$
};
\node (pro3) [process, left = of pro2]
{
\textbf{Getting a warm-start}:
Take $Z_{1:d, (d+1):(d+n)}$.
};

\node (out1) [io, left of=pro3, xshift=-1.2cm] {Answer};

\draw [arrow] (in1) -- node[anchor=north] {reformulate} (pro1);
\draw [arrow] (prob) -- node[anchor=north] {input} (in1);
\draw [arrow] (pro1) -- node[anchor=west] {relax} (pro2);
\draw [arrow] (pro2) -- node[anchor=north] {SDP solver} (pro3);
\draw [arrow] (pro3) -- node[anchor=north] {GD} (out1);
\draw [arrow] (out1) -- node[anchor=east] {check} (prob);

\end{tikzpicture}

\caption{A flowchart of the SDR algorithm in SNL}
\end{figure}

Similarly, a high-dimensional model of a neural network can extract more features from the dataset. 
With a well-designed structure, large neural networks are enabled to have more generalization ability than small neural networks.
For instance, Transformer \cite{transformer} and GPT \cite{gpt3} can capture long-term dependencies in the dataset, which improves the performance of many natural language processing tasks.
As for Computer Vision tasks, ResNet \cite{resnet} and VGG \cite{vgg}  are examples of high-dimensional models that can perform well on a wide range of image classification problems.

Generally, high-dimensional models learn more complex features than low-dimensional models and can outperform the latter under certain conditions.
However, such models may also potentially overfit the data as the number of parameters increases.
Therefore, adding a regularizer or performing transfer learning is necessary to avoid the overfitting issue.
In the scenario of solving SNL problems, the importance of regularization in optimization was also suggested by experiments in \cite{biswas_five_people_2006,biswas_lian_wang_ye_2006}.

Furthermore, a well-formed larger model may contain some specific structures of higher order.
For instance, the method of semi-definite relaxation of the SNL problem exploits the structure of the problem by iterating the relaxation of the vector product $Y_{ij}$ of the expected position of the sensors to minimize the loss function
$$
    \sum_{\{i,j\}\in N_x} |Y_{ii} - 2Y_{ij} + Y_{jj} - d_{ij}^2| + \sum_{(i,j) \in N_a} |a_{i}^T a_i - 2a_{i}^T x_j \ + Y_{jj} - \tilde{d}_{ij}^2|.
$$
Moreover, in the max-cut problem, one ultimately wants to solve
\begin{align}
\begin{split}
\label{maxcut origin}
    \text{maximize}\quad &
            \frac12 \sum_{1\leq i<j \leq M} r_{ij} (1-b_ib_j), \\
    \text{subject to} \quad &
            |b_i| = 1, \quad i = 1,\dots, M,
\end{split}
\end{align}
where $r_{ij}\geq 0$ are known. 
One may do a second-order augmentation as 
\begin{align}
\begin{split}
\label{maxcut relaxed}
    \text{maximize}\quad &
            \frac12 \sum_{1\leq i<j \leq M} r_{ij} (1-B_{ij}), \\
    \text{subject to} \quad &
            B_{ii} = 1, \quad i = 1,\dots, M, \\
            & B \succeq 0.
\end{split}
\end{align}
This process increases the dimension of the optimization problem we solve from $M$, the dimension of \eqref{maxcut origin}, to $O(M^2)$, the dimension of \eqref{maxcut relaxed}, which received theoretical and experimental success; see, e.g., \cite{maxcut_gwalgorithm, semidefiniterelax}. 

In comparison, the transformer architecture uses the scaled product to train attention over data, as described by the equation:
$$
    \text{Attention} (Q, K, V) = \text{softmax} \left( \frac{QK^T}{\sqrt{d_k}}\right) V.
$$
Here, the matrices $Q,K$ and $V$ represent the queries, keys and values \cite{transformer}, and $d_k$ is the dimension of the keys.
This matrix product is used similarly; it helps the model to identify which queries are important and then scales the attention to each one. 
The scaled product is used to ensure that the attention is evenly distributed across all of the queries.
This suggests that the matrix product can sometimes be a good structure to increase the dimension of the problem.

\subsection{Blessing of a Warm-Start from a High-Dimensional Model}
In addition to increasing the dimension of a problem, the way to get a warm-start from the high-dimensional solution also plays an important role in solving the problem.

For example, in the max-cut problem, the solution is generated by rounding the solution obtained from the relaxed problem to a feasible solution for the original problem, which can be done using various methods, such as the Goemans-Williamson algorithm \cite{maxcut_gwalgorithm}. 
We recall the max-cut problem \eqref{maxcut origin} and its relaxed form \eqref{maxcut relaxed}.
The key process is to let 
$$
    b_j=1, \quad \text{if } v_i^T v \geq 0.
$$
where $v$ is a vector uniformly distributed on the unit sphere $S_n$ and $v_i$ is the $i$th column of $F$. Here, $F$ is the incomplete Cholesky decomposition of $B$, i.e., $B=F^TF$.
The methods used in this key step determine how well the solution to the relaxed problem can be translated back to the solution set to the original problem.

Similarly, in the SNL problem, the reularized SDR+GD method that uses the SDR solution as a warm-start of the loss function method has better performance than the pure high-dimensional method and pure low-dimensional method, which is shown in \cite{biswas_five_people_2006, biswas_lian_wang_ye_2006}.

Also, in the field of transfer learning, there are two kinds of analogies to the process of generating warm-starts from the high-dimensional models, which are fine-tuning and feature extraction. 
We refer the readers to \cite{tl_survey_2010, tl_survey_2019} for details.
 
Fine-tuning is the process of taking a pre-trained model and adapting its weights to new tasks.
It usually involves some amount of supervised training and typically requires using the same type of neural network architecture as the pre-trained model.
The new model is then initialized with the weights of the pre-trained model and then retrained on the new dataset. 
Fine-tuning is useful when the dataset is relatively small, as it allows for better generalization than training from scratch.

Feature extraction is the process of extracting useful features from a pre-trained model for use in another task.
Feature extraction usually involves extracting the last few layers of a pre-trained model and using them as inputs to a new model trained on the new task.
Feature extraction usually requires using a different type of neural network architecture than the pre-trained model.
It is useful for tasks with large datasets or with plenty of data to use for feature extraction.

In both cases, the trained large neural network provides a warm-start for the final neural network which we get from various training methods.

\section{Conclusions and Future Work}
\label{conclu}

The blessing of high-order dimensionality in scientific computing stems from the ability to exploit the structure of a problem by increasing the dimension of the problem and getting warm-starts from the high-dimensional optimization.
This approach can lead to more efficient algorithms and better generalization. 
However, it is essential to carefully design the larger model, considering the problem's structure, such as the order of the dimension augmentation, and the choice of generating a warm-start for the original problem. 
By doing so, one can tackle complex problems in various fields, such as real-world optimization problems and machine learning, by increasing the dimension of the problem.

Taking SNL as an example, we have demonstrated in Section \ref{Non-Convexity of the Low-Dimensional Optimization} that the low-dimensional method of minimizing the loss function does not work well, as the loss function is often non-convex. 
Our main theorem \ref{main_theorem} clearly illustrates this fact in terms of probability.
In contrast, the high-dimensional SDR method introduced in Section \ref{Convexity of the High-Dimensional Optimization} skillfully reformulate the SNL problem as a convex problem. 
Additionally, we have proved in Section \ref{Non-Convexity of Direct Dimension Augmentation} that the method of direct dimension augmentation usually minimizes a non-convex objective function.
Furthermore, we give an explanation for why the SDR+GD method that uses the SDR solution as a warm-start of the loss function method works in Section
\ref{High-dimensional Solution as a Warm-Start of the Low-dimensional Optimization}. 

We emphasize in Section \ref{Comparison to the Neural Network} the connection between our findings and some similar methods in neural networks, as well as the blessing of dimensionality in the fields of optimization and machine learning. 

Future work related to this paper includes:
\begin{itemize}
\item Generalization of the product-type dimension augmentation.

As shown above, the product-type dimension augmentation technique is used to solve SNL and max-cut problems.
We wonder if this technique can be used for more optimization problems.
More specifically, we wonder if there is a more general type of non-convex optimization problem that can be solved by this technique.

\item Completely rigorous explanation for the success of the SDR+GD method.

We hope a completely rigorous explanation can be given to explain why the warm-start given by the SDR method works so well in the SDR+GD method.
Moreover, we wonder if there exists regularization in the SDR method that can help the SDR method solve all SNL problems generated by globally rigid frameworks.

\end{itemize}

\textbf{Acknowledgment.}
We are grateful to Jiajin Li, Chunlin Sun, Bo Jiang and Qi Deng for their valuable remarks and suggestions which helped us to improve the paper and the code.

\bibliographystyle{plain}
\bibliography{bib}

\end{document}